\begin{document}

\title{Persistent massive attractors of smooth maps}

\author{{D.~Volk}}
%    Address of record for the research reported here
\address{Denis~Volk
\newline\hphantom{iii} Scuola Internazionale Superiore di Studi Avanzati
\newline\hphantom{iii} Institute for Information Transmission Problems, Russian Academy of Sciences}
%\address{}
%    Current address
%\curraddr{SISSA, via Bonomea n.265, 34136 Trieste Italy}
\email{denis.volk@sissa.it}
%    \thanks will become a 1st page footnote.
\thanks{The author was supported in part by grants RFBR 10-01-00739, RFBR/CNRS 10-01-93115, President's of Russia MK-2790.2011.1, PRIN, ``Young SISSA Scientists''}

%    Information for second author
%\author{Author Two}
%\address{Mathematical Research Section, School of Mathematical Sciences,
%Australian National University, Canberra ACT 2601, Australia}
%\email{two@maths.univ.edu.au}
%\thanks{Support information for the second author.}

%    General info
\subjclass[2010]{Primary: 37C05, 37C20, 37C70, 37D20, 37D45}

%\date{\timestamp}

%\dedicatory{This paper is dedicated to our advisors.}

\keywords{Dynamical systems, attractors, partially hyperbolic dynamics, skew products}

\begin{abstract}
For a smooth manifold of any dimension greater than one, we present an open set of smooth endomorphisms such that any of them has a transitive attractor with a non-empty interior. These maps are $m$-fold non-branched coverings, $m \ge 3$. The construction applies to any manifold of the form $S^1 \times M$, where $S^1$ is the standard circle and $M$ is an arbitrary manifold.
\end{abstract}

\maketitle

\section{Introduction}  \label{s:intro}

Consider a smooth map $F$ of a compact manifold $X$ (possibly with boundary) into itself. The semigroup generated by $F$ is a discrete-time dynamical system. In the theory of dynamical systems, we are interested in the limit behavior of the orbits. In a wide class of dynamical systems, one can find a proper subset $\Lambda \subset X$ and an open set $B \supset \Lambda$ such that the trajectories of almost every point in $B$ accumulate (in some sense) onto $\Lambda$. Then $\Lambda$ is said to be an \emph{attractor} and $B$ its \emph{basin}. Various formalizations of the word ``accumulate'' give rise to different (usually non-equivalent) definitions of attractors. The proper identification of an attractor is very important in applications because it is often possible to reduce the dimensions or size of the system by restricting it to its attractor.

We do not consider the trivial case, $\Lambda = X$. In this situation, the notion of an attractor does not provide any additional information on the system. Now we address the following natural question suggested by Yu.~Ilyashenko: if $\Lambda \neq X$, how \emph{big} can $\Lambda$ be?

There are many ways to tell what is ``big''. An incomplete list of approaches includes (listed from strongest to weakest)
\begin{itemize}
\item having non-empty interior (\emph{T-big}, for topology),
\item having a positive Lebesgue measure (\emph{M-big}, for measure)
\item having full Hausdorff dimension (\emph{D-big}, for dimension).
\end{itemize}

Given full freedom, one can construct dynamical systems with very weird behavior. Thus we are interested only in (counter)examples which are smooth enough and admit some sort of non-degeneracy. In this paper, we consider only the systems which are at least $C^{1+\alpha}$. When we speak of robustness, we assume the space of all systems is equipped with the $C^1$ topology.

This topic has been very hot in recent years. Abdenur, Bonatti, D\'{\i}az~\cite{Abdenur2004} conjectured that $C^1$-generic transitive diffeomorphisms whose non-wandering set has a non-empty interior are (globally) transitive. If this is true, no T-big attractors are possible for $C^1$-generic diffeomorphisms. They gave the proofs for three cases: hyperbolic diffeomorphisms, partially hyperbolic diffeomorphisms with two hyperbolic bundles, and tame diffeomorphisms. They mentioned that in the first case, the proof is folklore; in the second one, they adapted the proof of Brin~\cite{Brin1975}. On the other hand, Fisher~\cite{Fisher2006a} gave an interesting example of a hyperbolic set with robustly non-empty interior. Unfortunately, this set is not transitive. Finally, a result by Gorodetski~\cite{Gorodetski2012} shows that D-big transitive invariant sets robustly appear in one-parameter unfoldings of homoclinic tangencies.

However, in these papers the authors never assumed the non-wandering set to be an \emph{attractor}. A classical result by Bowen~\cite{Bowen1975a} states that every \emph{hyperbolic} attractor of a diffeomorphism has a zero Lebesgue measure, thus forbidding M-big hyperbolic attractors. But in the non-hyperbolic setting the question is still open. In~\cite{McCluskey1983}, McCluskey and Manning showed that when a $C^2$-diffeomorphism of a 2-surface bifurcates from an Anosov to a DA, the Hausdorff dimension of the newborn attractor is 2 (moreover, it is a continuous function of the parameter). There is also a positive result for the space of all boundary preserving diffeomorphisms of a compact manifold \emph{with boundary}. Ilyashenko~\cite{Ilyashenko2011} showed that there exists a quasiopen set of such maps such that any map in this set has a M-big attractor.

In the realm of smooth endomorphisms, there are more positive answers. In $\dim = 1$ in the quadratic family, the famous Feigenbaum attractor with an absolutely continuous invariant measure provides an example of a M-big attractor. Also, Dobbs~\cite{Dobbs2006} showed there exists a $C^2$-smooth map of the interval with a finite critical set whose Julia set (see~\cite{Melo1993}) has Hausdorff dimension 1. This is an example of a D-big repeller.

The goal of this paper is to provide a $C^1$-robust example of a T-big attractor of an endomorphism. Our construction is based on skew products over expanding circle maps.
%There are two noteworthy results closely related to this.
%The skew products over expanding circle maps is another interesting class of smooth endomorphisms.
Tsujii, Avila, Gou{\"e}zel~\cite{Tsujii2001}, \cite{Avila2006} and Rams~\cite{Rams2003} already showed that in certain concrete family of such skew products the attractor may stably carry an invariant SRB measure which is absolutely continuous w.r.t. the Lebesgue measure. Thus their attractors are M-big.

In the Tsujii's example, the fiber maps are uniformly contracting, which ensures the existence of an attractor. Viana~\cite{Viana1997} used another mechanism to create an attractor, namely, the unimodal maps in the fibers. The result is an M-big attractor exhibiting all positive Lyapunov exponents almost everywhere. Due to this expansion property, the Viana's attractor is also T-big as shown in~\cite{Alves2002}. Moreover, it is $C^3$-robust.

%In the next Section, we present two versions of our main theorem, each one deals with its specific space of dynamical systems.
In our main Theorem~\ref{t:endos}, we also construct an example of a T-big attractor. However, the argument is more in the Tsujii's spirit thus showing that a single expanding direction is already enough to create robust T-big attractors. Moreover, our example is robust in the space of $C^1$-smooth endomorphisms. The skew products from~\cite{Tsujii2001}, \cite{Avila2006} and~\cite{Rams2003} can be included in this space as subsets of infinite codimension.

Our proof is based on the theory of partially hyperbolic perturbations started by Hirsch, Pugh, Shub~\cite{HPS1977}, and continued by Gorodetski~\cite{Gorodetskiui2006} and Ilyashenko, Negut~\cite{Ilyashenko2012}. First we construct a certain skew product over an expanding circle map. Then we prove that every skew product which
%smooth map which
is close enough to it, has a massive attractor (Theorem~\ref{t:skpr-circle}). Finally, we show the same is true for endomorphisms which are not necessary skew products (Theorem~\ref{t:endos}).

\section{Main theorems} \label{s:main-theorems}

Let $X$ be some smooth manifold and let a smooth map~$F\colon X\to X$ define a discrete-time dynamical system. In this paper, we use the term ``region'' to describe the closure of a non-empty connected open set in $X$. First we recall some classical definitions related to attractors. Denote by $\inter A$ the interior of any set $A$.

\begin{definition}  \label{d:amax}
A compact set $D \subset X$ is called \emph{trapping} if ${F(D)} \subset \inter D$. Then the closed $F$-invariant set
$$
\Amax := \bigcap_{n=0}^\infty F^n(D)
$$
is said to be a \emph{maximal attractor} for $F$.
\end{definition}
This definition gives the description of an attractor from a geometrical point of view. For the statistical description, we recall the following
\begin{definition}  \label{d:srb}
An $F$-invariant measure $\mu$ is called \emph{Sinai-Ruelle-Bowen (SRB)} if there exists a measurable set~$E \subset X$, $\Leb(E) > 0$, such that for any test function $\psi \in C(X)$ and any $x \in E$ we have
$$
\lim\limits_{n\to\infty} \frac{1}{n}\sum_{i=0}^{n-1} \psi (F^i(x)) = \int\limits_{X} \psi \, d\mu.
$$
The set $E = E(\mu)$ is called the \emph{basin} of $\mu$.
\end{definition}

\begin{definition}  \label{d:ma}
Let $\Amax \ne X$ be a maximal attractor for a certain trapping region~$D \supset \Amax$. We say that $\Amax$ is \emph{massive} if
\begin{enumerate}
\item $\Amax$ is a region;
\item $\Amax$ is the support of an invariant ergodic SRB measure $\mu$ such that $E(\mu) \supset D$.
\end{enumerate}
\end{definition}
In particular, any massive attractor is T-big and transitive.

Now let $M$ be any compact Riemannian manifold,
%\footnote{Compact or not, with boundary or not, oriented or not.}
and fix any $m \in \bbN$, $m \ge 3$. The existence theorems of this paper will be proven in a constructive way. The dimension of $M$ and the number~$m$ are the main parameters of the construction.

Let $S^1 = \bbR / \bbZ$ be the standard circle and let $X := S^1 \times M$. Denote by $C^1(X)$ the space of all $C^1$-smooth $m$-to-$1$ coverings of $X$ by itself, with $C^1$-topology.

\begin{theorem} \label{t:endos}
There exists a nonempty open set~$\mathcal O \subset C^1(X)$ such that any $\FF \in \mathcal O$ has a massive attractor $\Amax(\FF)$.
%Moreover, the set $U$ is uniform for all these dynamical systems and has the form $U = S^1 \times B$ where $B \subset M$ is diffeomorphic to an open ball.
\end{theorem}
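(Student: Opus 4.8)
\emph{Proof plan.} Write $F_0$ for the skew product over an expanding circle map around which Theorem~\ref{t:skpr-circle} is stated. By construction $F_0\colon X\to X$ is an $m$-to-$1$ covering which is partially hyperbolic, $TX=E^u\oplus E^c$, with $E^u$ tangent to the $S^1$-factor and uniformly expanded, $E^c$ tangent to the fibres $\{x\}\times M$, and the expansion along $E^u$ strictly dominating the growth along $E^c$. First I would note that ``being an $m$-to-$1$ covering'' and ``being partially hyperbolic with a prescribed pair of cone fields'' are $C^1$-open conditions; hence there is a $C^1$-neighbourhood $\mathcal V\ni F_0$ in $C^1(X)$ all of whose elements are partially hyperbolic $m$-to-$1$ coverings with splitting $C^0$-close to that of $F_0$.

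Next, for $\FF\in\mathcal V$ I would straighten $\FF$ into a skew product. The fibre foliation $\{\{x\}\times M\}$ is $F_0$-invariant (each fibre is mapped diffeomorphically onto a fibre) and \emph{normally expanded}, since the complementary direction $E^u$ is uniformly expanded. By the Hirsch--Pugh--Shub theory, in the form adapted to endomorphisms by Gorodetski and by Ilyashenko--Negut, for $\FF$ sufficiently $C^1$-close to $F_0$ there is a unique $\FF$-invariant foliation $\mathcal W^c_{\FF}$ whose leaves are uniformly $C^1$ graphs over $M$, each $C^0$-close to a fibre, which $\FF$ maps leaves onto leaves, which coincides with the fibre foliation at $\FF=F_0$, and which depends continuously on $\FF$; with enough normal bunching built into $F_0$ its holonomies are moreover of class $C^1$. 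The leaf space $\mathbf B_{\FF}:=X/\mathcal W^c_{\FF}$ is then a circle: since distinct leaves are disjoint graphs over $M$, fixing $y_0\in M$ the assignment $x\mapsto[\text{the leaf through }(x,y_0)]$ is a homeomorphism $S^1\to\mathbf B_{\FF}$ through which $\FF$ descends to a degree-$m$ expanding self-covering $\hat h_{\FF}$ of $S^1$, $C^1$-close to the base map of $F_0$. Choosing a continuous family of $C^1$ parametrizations of the leaves by $M$ (the standard one at $\FF=F_0$) gives a homeomorphism $H_{\FF}\colon X\to X$, $C^0$-close to the identity, with $\hat\FF:=H_{\FF}\circ\FF\circ H_{\FF}^{-1}$ a skew product $\hat\FF(x,y)=(\hat h_{\FF}(x),\hat g_{\FF}(x,y))$ over $\hat h_{\FF}$, $C^1$-close to $F_0$. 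Shrinking $\mathcal V$ to a neighbourhood $\mathcal O$, I would arrange by continuity that $\hat\FF$ lies in the range of skew products to which Theorem~\ref{t:skpr-circle} applies; since $F_0\in\mathcal O$, the set $\mathcal O$ is nonempty and $C^1$-open.

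Finally I would transport the conclusion. By Theorem~\ref{t:skpr-circle}, $\hat\FF$ has a massive attractor: there is a trapping region $\hat D$ with $\Amax(\hat\FF)=\bigcap_{n\ge0}\hat\FF^n(\hat D)$ a region, equal to the support of an ergodic SRB measure $\hat\mu$ with $E(\hat\mu)\supset\hat D$. Put $D:=H_{\FF}^{-1}(\hat D)$ and $\mu:=(H_{\FF}^{-1})_{*}\hat\mu$. Because $H_{\FF}$ is a homeomorphism conjugating $\FF$ to $\hat\FF$, it follows at once that $D$ is a trapping region for $\FF$, that $\Amax(\FF)=\bigcap_{n\ge0}\FF^n(D)=H_{\FF}^{-1}(\Amax(\hat\FF))\ne X$ is a region, that $\mu$ is $\FF$-invariant and ergodic, and that its support is $\Amax(\FF)$. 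For the SRB property I would \emph{not} need $H_{\FF}$ to be absolutely continuous: for every $x\in D$ one has $H_{\FF}(x)\in\hat D\subset E(\hat\mu)$, hence for each $\psi\in C(X)$ the averages $\tfrac1n\sum_{i=0}^{n-1}\psi(\FF^i x)=\tfrac1n\sum_{i=0}^{n-1}(\psi\circ H_{\FF}^{-1})(\hat\FF^i(H_{\FF}x))$ converge to $\int\psi\circ H_{\FF}^{-1}\,d\hat\mu=\int\psi\,d\mu$, while $\Leb(D)>0$ since $D$ is a region. Thus $\mu$ is an ergodic SRB measure for $\FF$ with $E(\mu)\supset D$, so $\Amax(\FF)$ is massive.

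I expect the real work to be in the second step: producing the invariant foliation $\mathcal W^c_{\FF}$ for the \emph{non-invertible} map $\FF$ — both the existence and uniqueness in the endomorphism setting, where the expanding side is many-to-one, and the control of holonomy regularity needed so that the reduced skew product $\hat\FF$ genuinely falls within the hypotheses of Theorem~\ref{t:skpr-circle}. Should the conjugacy $H_{\FF}$ only be obtainable in class $C^0$, the fallback is to re-run the SRB-measure construction from the proof of Theorem~\ref{t:skpr-circle} directly for $\FF$ in the coordinates adapted to $\mathcal W^c_{\FF}$, which is legitimate because that construction relies only on the dominated splitting and on the $C^1$-closeness of the fibrewise dynamics to the model.
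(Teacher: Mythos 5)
Your proposal is correct in outline and follows essentially the same strategy as the paper: use the persistence of the normally expanded centre foliation (Hirsch--Pugh--Shub, in the endomorphism form of Gorodetski and Ilyashenko--Negut) to conjugate $\FF$ by a homeomorphism to a genuine skew product with fibre maps $C^1$-close to those of $F_0$, invoke Theorem~\ref{t:skpr-circle}, and transport the massive attractor back through the conjugacy --- your observation that the SRB property survives a non--absolutely-continuous conjugacy because the basin contains the entire trapping region is exactly the point the paper relies on. Two differences in execution are worth noting. First, the paper does not rectify $\FF$ directly on $X$: it passes to the solenoidal extension $\hat\FF$ on $B\times M$ and applies the semiconjugacy theorems of \cite{Ilyashenko2008}, \cite{Ilyashenko2012} there (Theorems~\ref{t:iks}--\ref{t:in}), then uses the $z$-independence of the semiconjugacy to descend back to $X$; this is a packaging choice that lets the author quote ready-made statements, including the crucial $C^1$-closeness of the rectified fibre maps, rather than reprove the foliation theory for a non-invertible map --- the very step you flag as ``the real work''. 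Second, and more substantively, your quotient construction produces a skew product over a \emph{perturbed} degree-$m$ expanding map $\hat h_{\FF}$, whereas $\mathcal C(M)$ and Theorem~\ref{t:skpr-circle} are formulated for skew products over the fixed linear map $h\colon\varphi\mapsto m\varphi$; the paper's semiconjugacy $p$ satisfies $p\circ\hat\FF=h\circ p$ with the unperturbed $h$, so no mismatch arises. Your version needs either an additional topological conjugation of the base to $h$ (harmless here, since the metric on $\mathcal C(M)$ only requires $C^0$ dependence of $f_\varphi$ on $\varphi$) or an appeal to the paper's remark that Theorem~\ref{t:skpr-circle} generalizes to arbitrary uniformly hyperbolic $m$-to-$1$ bases; as written this step is asserted but not justified.
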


\begin{remark}
In our construction, any covering~$\FF$ is a factor of an invertible dynamical system $\hat \FF$. Any such $\hat \FF$ has an invariant hyperbolic set $\Lambda (\hat \FF)$ and $\Amax(\FF)$ is the projection of this set. In particular, this means $\mu$ itself is hyperbolic: its Lyapunov exponents are non-zero.
\end{remark}

\begin{remark}
Because the attractor is hyperbolic and by Ruelle's classical result~\cite{Ruelle1997}, $\mu(\FF)$ depends on $\FF$ in a differentiable way.
\end{remark}

\begin{remark}
It is essential in our construction that $m \ge 2$. For technical reasons, we assume $m \ge 3$ but we believe all theorems are true for $m=2$ as well.
\end{remark}

\begin{remark}
The most of the construction is localized within a small ball $\hat A \subset M$, so the global properties of~$M$ are pretty much irrelevant to us. The compactness assumption about $M$ is used solely to apply the results of~\cite{Ilyashenko2012}.
\end{remark}

%\begin{remark}
%Theorem~\ref{t:endos} implies the robust existence of an orbit which is dense in $U$: due to the ergodicity of $\mu$, the orbits of a.e. point w.r.t $\mu$ are dense in $\supp\mu$. However, we can prove this directly, by constructing the proper ``critical words''.
%\end{remark}

%\begin{remark}
%Though our construction could be carried out for any $k \ge 1$, $m \ge 2$, the case $k = 1$, $m = 4$ allows to explain it with the least amount of technical details. Therefore in the Sections~\ref{s:skpr-circle}--\ref{s:diffeos}, we will stick to these values of $k$ and $m$.
%\end{remark}

To establish Theorem~\ref{t:endos}, first we prove it for a special class of skew products.
Let $h \colon \varphi \mapsto m\varphi \mod 1$ be the standard linear expanding map of $S^1$.
Denote by $\mathcal C (M)$ the space of all skew products over $h$ with the fiber~$M$, i.e., the maps of the form
\begin{equation}    \label{e:skew}
  F \colon (\varphi, x) \mapsto (h \varphi, f_\varphi(x)), \quad \varphi \in S^1, \quad x \in M.
\end{equation}
Here $f_\varphi$ is a $C^1$-diffeomorphism onto its image, $f_\varphi$ is $C^0$ in $\varphi$. The metric on $\mathcal C (M)$ is defined as
$$
\dist(F,G) := \sup_\varphi \dist_{C^1} \left( f_\varphi^{\pm 1}, g_\varphi^{\pm 1}\right).
$$

\begin{theorem} \label{t:skpr-circle}
There exists a nonempty open set in $\mathcal C(M)$ such that any skew product from this set has a massive attractor.
\end{theorem}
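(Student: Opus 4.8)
\emph{Outline.} The plan is to build one explicit skew product $F_0\in\mathcal C(M)$ whose maximal attractor over a suitable trapping region is massive in the sense of Definition~\ref{d:ma}, and then to show that massiveness is inherited by every $C^1$-small perturbation inside $\mathcal C(M)$. Fix a small closed ball $\hat A'\subset M$ with smooth boundary, lying in a chart, and set $D:=S^1\times\hat A'$. We design the fibre maps $f_\varphi$ so that: (i) each $f_\varphi$ is a diffeomorphism of $M$ with $f_\varphi(\hat A')\subset\inter\hat A'$, so that $D$ is a trapping region; (ii) on $\hat A'$ the map $f_\varphi$ is, up to a small translation, an affine contraction whose singular values are all $<1$, with one strongly contracting direction $v(\varphi)$ and $\dim M-1$ weakly contracting ones, the fibre Jacobian being uniformly bounded below by some $\delta>1/m$; (iii) the strong direction $v(\varphi)$ \emph{precesses}: as $\varphi$ runs over $S^1$ it sweeps out all directions; (iv) the $m$ branches over a common point of $S^1$ carry translations spread out across $\hat A'$. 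Outside a neighbourhood of $\hat A'$ the maps $f_\varphi$ are extended to diffeomorphisms of $M$ in an arbitrary way (possible since $f_\varphi(\hat A')$ and $\hat A'$ are both balls, and only the behaviour on $\hat A'$ will matter). Thus $F_0$ expands the base by $m$ and contracts every fibre direction — a single expanding direction, in Tsujii's spirit.

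\emph{Massiveness of $\Amax(F_0)$.} Write $\Amax=\bigcap_n F_0^n(D)$; its fibre over $\psi\in S^1$ is $K_\psi=\bigcap_n\bigl(\bigcup_{h^n\varphi=\psi}f^{(n)}_\varphi(\hat A')\bigr)$, where $f^{(n)}_\varphi:=f_{h^{n-1}\varphi}\circ\dots\circ f_\varphi$. For each $n$ there are $m^n$ pieces $f^{(n)}_\varphi(\hat A')$: by the precession~(iii) the accumulated contraction of a composition is essentially isotropic, so each piece is roughly round of radius $\asymp\lambda^n$, $\lambda<1$ the geometric-mean contraction rate; since $h$ is mixing and the translations are spread~(iv) the pieces are roughly equidistributed over $\hat A'$; and since $m\delta>1$ their total volume grows without bound. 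Hence for every $n$ the pieces cover a fixed ball $V\subset\inter\hat A'$ (with multiplicity tending to infinity; the finitely many small $n$ are checked directly), so $K_\psi\supseteq V$ for all $\psi$, giving $\Amax\supseteq S^1\times V$ with non-empty interior; and $\Amax\subseteq D\subsetneq X$, so $\Amax\neq X$. Moreover $\Amax$ is a nested intersection of the compact connected sets $F_0^n(D)$, hence compact and connected, and a refinement of the covering estimate shows each $K_\psi$, and therefore $\Amax$, equals the closure of its interior, so $\Amax$ is a region. For the SRB measure we pass to the natural extension: since $F_0$ is a degree-$m$ skew product over $h$ with invertible fibres, its natural extension $\hat F_0$ is an invertible skew product over the $m$-adic solenoid $\hat h\colon\hat\Sigma\to\hat\Sigma$ with the same fibre maps; on $\hat\Lambda:=\bigcap_n\hat F_0^n(\hat D)$, $\hat D:=\hat\Sigma\times\hat A'$, it is uniformly hyperbolic — unstable $=$ the one-dimensional leaf direction of $\hat\Sigma$ (rate $m$), stable $=$ the fibre directions (singular values uniformly $<1$). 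Thus $\hat\Lambda$ is a hyperbolic attractor; it is connected because $\hat\Sigma$ is, hence transitive (a hyperbolic attractor splits into finitely many pieces open in it, and connectedness leaves only one). By the Bowen--Ruelle theory it carries a unique ergodic SRB measure $\hat\mu$ with $\operatorname{supp}\hat\mu=\hat\Lambda$ and basin containing $\hat D$, and its push-forward $\mu:=\pi_*\hat\mu$ under the canonical semiconjugacy $\pi$ is an ergodic SRB measure for $F_0$ with $\operatorname{supp}\mu=\pi(\hat\Lambda)=\Amax$ and $E(\mu)\supseteq\pi(\hat D)=D$. Both conditions of Definition~\ref{d:ma} hold, so $\Amax(F_0)$ is massive.

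\emph{Robustness.} The covering statement above is a finite list of strict inequalities — approximate roundness and equidistribution-with-volume-surplus up to some level $N$, plus a uniform tail estimate for $n>N$ — hence persists under $C^1$-small perturbation of the fibre maps; so $\Amax(F)\supseteq S^1\times V'$ for a slightly smaller ball $V'$ for every $F$ near $F_0$, and $\Amax(F)\neq X$, connectedness and the ``region'' property likewise persist. On the lifted side a uniformly hyperbolic attractor is structurally stable, so $\hat\Lambda$ has a continuation for $\hat F$ near $\hat F_0$, still connected and transitive, still carrying a unique ergodic SRB measure with the required basin; and $F\mapsto\hat F$ is continuous in the relevant topology. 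To make all of this rigorous in the laminated (solenoidal) setting we invoke the theory of partially hyperbolic perturbations of Hirsch--Pugh--Shub~\cite{HPS1977}, Gorodetski~\cite{Gorodetskiui2006} and Ilyashenko--Negut~\cite{Ilyashenko2012}. Hence a whole $C^1$-neighbourhood of $F_0$ in $\mathcal C(M)$ consists of maps with massive attractors, which is Theorem~\ref{t:skpr-circle}.

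\emph{The main difficulty.} The crux is the tension between the two halves of massiveness: the fibres must be \emph{uniformly} contracted — so that $D$ is genuinely trapping and the lifted attractor genuinely hyperbolic, which is what yields both the SRB measure and the robustness — yet the fibrewise images must collectively \emph{fill an open set}, so the attractor is T-big and not merely M-big. A straightforward ``fat'' iterated function system of affine contractions cannot do this once $\dim M\geq 3$: a volume count forces the fibre Jacobians to exceed $1/m$, which for fixed small $m$ forces the fibre maps to be nearly isometric, and then their attractor collapses onto a proper subvariety. The precession device~(iii) is what breaks this deadlock, and proving that it really produces non-empty interior — robustly — amounts to a quantitative, ``open'' strengthening of Tsujii's transversality argument; this, rather than the soft passage to the hyperbolic attractor, is where the work lies.
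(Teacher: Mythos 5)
Your overall architecture (an explicit model skew product, passage to an invertible solenoidal extension where the attractor is a hyperbolic graph carrying an SRB measure, then openness) matches the paper's. But the heart of the matter --- why the fibres of the attractor contain a fixed open set, robustly --- is where your argument has a genuine gap. You claim that because the $m^n$ pieces $f^{(n)}_\varphi(\hat A')$ are ``roughly round'', ``roughly equidistributed'', and have total volume growing without bound (from $m\delta>1$), they must cover a fixed ball $V$ for every $n$. Equidistribution plus a volume surplus does not imply covering: exponentially many small sets of unbounded total volume can accumulate on a Lebesgue-null, nowhere dense set while being equidistributed with respect to any reasonable reference measure. Making ``precession implies covering'' precise is exactly a Tsujii-type transversality problem, which in the known results yields absolute continuity of the SRB measure (M-bigness) for special families and is not known to be $C^1$-robust --- the paper explicitly remarks that the Tsujii/Avila--Gou\"{e}zel/Rams examples sit in infinite codimension inside $\mathcal C(M)$. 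Your robustness paragraph compounds the problem: the covering statement is not ``a finite list of strict inequalities plus a uniform tail estimate'', because the tail (all scales $n>N$ simultaneously) is precisely where the covering must be established and where no estimate is offered. You flag this yourself (``this \dots is where the work lies''), but the work is not done, and it is the entire content of the theorem.

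The paper avoids transversality altogether by a different, elementary mechanism, and this is the idea your proposal is missing. Using $m\ge 3$, one chooses two disjoint closed arcs $L_0,L_1\subset S^1$ of length $1/m$, each mapped by $h$ onto all of $S^1$, and takes the fibre maps to equal fixed diffeomorphisms $\tilde f_0$ over $L_0$ and $\tilde f_1$ over $L_1$, where $\tilde f_0,\tilde f_1$ are contractions of a ball $\hat A$, $C^1$-close to the identity, engineered so that $\tilde f_0(A)\cup\tilde f_1(A)\Supset A$ for a suitable region $A$ (Theorems~\ref{t:affine-maps} and~\ref{t:smooth-maps}: a rectangular box two of whose rotated, shrunk translates cover it with a definite gap, followed by taking $k$-th roots of the affine maps to bring them near the identity). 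Since every point of $S^1$ has one $h$-preimage in each $L_i$, an elementary induction (Proposition~\ref{p:strip}) shows that the fibre of the attractor over \emph{every} $\varphi$ is $\lambda^{n}\cdot\mathrm{diam}(A)$-dense in $A$ for every $n$, hence contains $A$; no counting, equidistribution, or transversality enters. The strict inclusion $\Supset$ is a single open condition in the $C^0$ topology on the two fibre maps, so the covering, and with it the whole argument, survives $C^1$-small perturbation. Without this device (or a genuinely new, robust transversality estimate, which would be a substantial theorem in its own right) your proof does not go through.
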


\begin{remark}
Theorem~\ref{t:skpr-circle} can be generalized in a straightforward way to the case when $h$ is any uniformly hyperbolic $m$-to-$1$ endomorphism of a smooth manifold, with $m \ge 3$.
\end{remark}

%The set~$\mathcal{O} \cap \mathcal{C}(M)$ is nonempty by construction, see Section~\ref{s:endo}. Thus Theorem~\ref{t:endos} implies Theorem~\ref{t:skpr-circle}.
In Sections~\ref{s:geometric}--\ref{s:strip-attr} we prove Theorem~\ref{t:skpr-circle} first, and then in Section~\ref{s:endo}, we apply Gorodetski-Ilyashenko-Negut techniques to obtain Theorem~\ref{t:endos}.

%\begin{remark}
%We also believe that the same statement is true for the skew products over any $q\colon \varphi \mapsto m\varphi \mod 1$, $m \ge 2$, $m \in \bbN$.
%\end{remark}

\section{Geometric construction}    \label{s:geometric}

In this Section, we establish two results which are not explicitly related to any dynamics, and might have other geometrical applications. We recently found a preprint by Homburg~\cite{Homburg2011} where he independently gives an argument equivalent to Subsection~\ref{ss:box}. However our Theorem~\ref{t:affine-maps} is stronger, and thus we give a complete proof here.

For regions $A$ and $B$, we will write $A \Supset B$ if $\inter A \supset B$. Note that this relation commutes with finite unions and intersections.

\begin{theorem} \label{t:affine-maps}
For any $n \in \bbN$ and $\eps > 0$ there exists a region $A \subset \bbR^n$ and a $C^\infty$-smooth arc of affine maps $E_t$, $t \in [0,1]$ such that
\begin{enumerate}
  \item\label{enum:am:1} for any $t \in [0,1]$ $E_t$ is contracting;
  \item\label{enum:am:2} $E_0 A \cup E_1 A \Supset A$;
  \item\label{enum:am:3} for any $t \in [0,1]$ $E_t$ is $\eps$-close to the identity in $\Aff(n)$.
\end{enumerate}
\end{theorem}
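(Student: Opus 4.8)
\emph{Strategy.} The three conditions pull against each other: being contracting forces $E_t$ to shrink $A$ in every direction, yet two shrunk copies of $A$ must cover $A$, and the shrinking must be $\eps$-small. I would absorb all of the shrinking into one ``long'' direction of $A$, where a one-dimensional overlap argument lets two sub-copies cover the whole; the unavoidable but tiny shrinking transverse to that direction is made harmless by taking $A$ to be a thin, tapered, suitably bent tube, so that the Borsuk-type obstruction to covering a convex body by two sets of smaller diameter never arises. Since the shrinking factor may be chosen arbitrarily close to $1$, the $\eps$-closeness is then automatic.

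\emph{Reduction and the arc.} It is enough to produce, for each $n$ and each $\lambda$ close to $1$, a region $A\subset\bbR^n$ and two conformal affine contractions $E_0(x)=\lambda O_0x+c_0$, $E_1(x)=\lambda O_1x+c_1$ with $O_i\in\mathrm{SO}(n)$ near the identity and $|c_i|$ small (all bounds tending to $0$ as $\lambda\to1$) such that $E_0A\cup E_1A\Supset A$. Given these, set $E_t:=(1-t)E_0+tE_1$: its linear part has operator norm $\le(1-t)\lambda+t\lambda=\lambda<1$, so each $E_t$ is contracting, hence \ref{enum:am:1}; it is affine, so $C^\infty$, in $t$; and for $\lambda$ close enough to $1$ every $E_t$ is $\eps$-close to the identity, which is \ref{enum:am:3}. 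The endpoints give \ref{enum:am:2}.

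\emph{The construction.} The one-dimensional case is the model: for $A=[0,1]$, $E_0(x)=\lambda x-\delta$, $E_1(x)=\lambda x+(1-\lambda)+\delta$ with $0<\delta<\lambda-\tfrac12$, one gets $E_0A\cup E_1A=[-\delta,1+\delta]\Supset A$. For $n\ge2$ I would let $A$ be a smooth thickening of a long circular arc $\gamma$ of large radius $R$ and angular span $\Phi<\pi$, namely the set of points within $w(s)$ of $\gamma(s)$, where the profile $w$ is a small constant on the bulk and tapers smoothly near the two ends (so $A$ has no fat round cap). Choose $E_0,E_1$ so that $E_0\gamma$ and $E_1\gamma$ are arcs parallel to $\gamma$, shifted to its two transverse sides and overshooting both ends slightly in the angular coordinate; this needs $|c_i|=O((1-\lambda)R)$ and the rotational parts of size $O(\delta/R)$. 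Then along $\gamma$ one applies the one-dimensional overlap trick in the angular variable, and transverse to $\gamma$ one applies it to the widths ($2\lambda w$ against $2w$, which closes since $\lambda>\tfrac12$), the taper ensuring the two strictly thinner image-tubes still cover the thin ends of $A$. For $n\ge3$ a round $(n-1)$-dimensional cross-section cannot be covered by two balls of smaller radius, so the cross-section of the tube must itself be a tube of the same type one dimension down, and the whole construction is carried out by induction on $n$, the extra rotational freedom in $\mathrm{SO}(n)$ supplying the transverse shifts in the new directions.

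\emph{Main obstacle.} The overlap bookkeeping and the verification of $\eps$-closeness are routine. The real difficulty is the behaviour of $A$ near its ends: the contracted tubes are strictly thinner there than $A$ itself, and one must tune the profile $w$, the overshoot of $E_i\gamma$ past the ends, and — crucially — take $\lambda$ close enough to $1$ that the conformal drift of $E_i\gamma$ relative to $\gamma$ along the whole arc stays negligible compared with the tube width, so that in particular the extreme tip of $A$ genuinely lies in $\inter(E_0A\cup E_1A)$. Making these end estimates work simultaneously in all dimensions through the inductive reduction of the cross-section is the heart of the proof.
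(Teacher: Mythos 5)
Your proposal takes a genuinely different route from the paper, but it contains a real gap which you yourself flag by deferring ``the heart of the proof'', and it is exactly there that the approach is in danger. The problem is the coherence of the two maps across the several directions in which covering is needed. A similarity of ratio $\lambda<1$ shrinks the tube in \emph{every} direction, so the two images must be split transversally (one on each side of the core arc $\gamma$) while \emph{each} of them covers the full length of $A$; you cannot run the one-dimensional overlap trick independently in the angular and the transverse variable, because a point lying in the angular range reached only by $E_0A$ and in the transverse range reached only by $E_1A$ would be covered by neither. For the image pushed radially outward, covering the full length is not automatic: the $\lambda$-scaled concentric arc already fails to overshoot, and a radially outward translation shortens its angular reach further. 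This can be rescued --- a circle of radius $\lambda R$ whose center is moved to distance about $R\sqrt{1-\lambda^{2}}$ from the original center lies outside the original circle over an angular range approaching $\pi$, at the price of a radial ``bulge'' of order $R\sqrt{1-\lambda}$ that must be dominated by the tube width, forcing $1-\lambda\ll (w/R)^{2}$ --- but this quantitative fact is precisely the missing estimate. For $n\ge3$ the difficulty compounds: your nested-tube induction requires the \emph{same} pair $(E_0,E_1)$ to overshoot at every level of the nesting and to split only at the innermost one, so translation and rotation components of prescribed (and conflicting) sizes in $n-1$ mutually transverse directions must coexist in one pair of maps while all remaining $\eps$-small. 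Nothing in the proposal verifies this compatibility, and it is not obvious.

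The paper decouples the two difficulties that you try to solve simultaneously. First it takes $A$ to be a box with geometrically decreasing sides $r_1>\dots>r_n>\frac{1}{2\lambda}r_1$ and composes the contraction with the cyclic coordinate rotation $R\colon(x_1,\dots,x_n)\mapsto((-1)^{n+1}x_n,x_1,\dots,x_{n-1})$: the image $\lambda RB$ is then \emph{wider} than $B$ in every direction but one, so two translates along that single direction cover $B$, and the splitting happens in one direction only, in every dimension at once. These maps are far from the identity, so in a second step the paper takes $k$-th roots of $G_t=\Lambda T_tR$ inside a one-parameter subgroup of $\Aff(n)$ and replaces the box by the union of the $2k$ intermediate regions $B_i^{(j)}$; closeness to the identity then comes for free as $k\to\infty$, with no delicate limit $\lambda\to1$ played against a fixed geometry. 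Your two-dimensional case is probably salvageable with the estimates indicated above, but as it stands the proposal is a plan rather than a proof, and for $n\ge3$ the key step is missing.
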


%\begin{remark}
%Theorem~\ref{t:affine-maps} remains true (and even easier) if we replace ``$\Supset$'' with just ``$\supset$''.
%\end{remark}

The proof is divided into two steps. The first step is to construct the region and the affine maps satisfying~(\ref{enum:am:1}) and~(\ref{enum:am:2}). The second step is to modify both the region and the maps to make them satisfy~(\ref{enum:am:3}).

\subsection{One box fits into its two smaller images}   \label{ss:box}

Fix some $0< \lambda < 1$ close enough to $1$. Let $B = [-r_1, r_1] \times [-r_2, r_2] \times \dots \times [-r_n, r_n] \subset \bbR^n$ be a standard rectangular box, and let the numbers $r_i > 0$ satisfy the inequalities
\begin{equation}\label{e:r-i}
\lambda r_i > r_{i+1}, \quad 1 \le i < n, \quad \text{and} \quad \lambda r_n > \frac12 r_1.
\end{equation}
Let $R \in SO(n)$, $R \colon (x_1, \dots, x_n) \mapsto ((-1)^{n+1}x_n, x_1, \dots, x_{n-1})$;
%send $e_i \mapsto e_{i+1}$, $1 \le i < n$, $e_n \mapsto e_1$:
%$$
%S = \begin{pmatrix}
%0 & 1 &  & \\
%& \ddots & \ddots & \\
%&  & \ddots & 1 \\
%1 & & & 0
%\end{pmatrix}
%$$
let $\Lambda = \lambda \cdot \Id$. Note that $\Lambda R B$ is the standard rectangular box with the sides $\lambda r_n, \lambda r_1, \dots, \lambda r_{n-1}$.

Due to \eqref{e:r-i} there exist two translations~$T_0$, $T_1$ of the space~$\bbR^n$ such that
\begin{equation}\label{e:tls}
\Lambda T_0 R B \cup \Lambda T_1 R B \Supset B.
\end{equation}
Indeed, the maps of the form~$(x_1 \pm s, x_2, \dots, x_n)$ suffice.

Let $T_t := tT_0 + (1-t)T_1$, $t \in [0,1]$.
Now the affine maps $G_t := \Lambda T_t R$ are contracting with the rate arbitrary close to identity, and $G_0 B \cup G_1 B \Supset B$. But the rotation $R$ is fixed and far from identity, so the statement~(\ref{enum:am:3}) cannot be achieved with these maps.

\subsection{Making the maps close to identity}

Now we construct new affine maps~$E_t$, $t \in [0,1]$, and a new region~$A$. The idea is to find~$E_t$ satisfying $E_t^k = G_t$ for $k \in \bbN$ big enough. Denote by $\bbR^+$ the space $\bbR$ as an additive group, denote by~$\Iso(n)$ the group of isometries of $\bbR^n$. The following Proposition is a simple exercise in linear algebra.

\begin{prop}
For $t\in [0,1]$ there exist Lie group homomorphisms $P_t\colon \bbR^+ \to \Aff(n)$ such that $P_t (1) = G_t$ and $\forall x\in \bbR$ $P_t(x) = \lambda^x \cdot U_t(x)$, $U_t(x) \in \Iso(n)$.
\end{prop}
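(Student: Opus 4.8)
The plan is to build $P_t$ separately on the two commuting ``parts'' of $G_t = \Lambda T_t R$: the homothety factor $\Lambda = \lambda \cdot \Id$ and the isometry factor $T_t R$. Since $\Lambda$ is a scalar, it commutes with everything, so it suffices to exhibit a one-parameter subgroup of $\Iso(n)$ passing through $T_t R$ at time $1$ and then multiply by the homothety flow $x \mapsto \lambda^x \cdot \Id$. Concretely, I would set $P_t(x) := \lambda^x \cdot U_t(x)$ where $U_t\colon \bbR^+ \to \Iso(n)$ is a homomorphism with $U_t(1) = T_t R$; then $P_t$ is automatically a homomorphism (the two factors commute and each is a homomorphism), $P_t(1) = \lambda \cdot T_t R = \Lambda T_t R = G_t$, and $P_t(x) = \lambda^x \cdot U_t(x)$ with $U_t(x) \in \Iso(n)$, which is exactly the claimed form.

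So everything reduces to the linear-algebra fact that the element $T_t R \in \Iso(n)$ lies on a one-parameter subgroup, i.e.\ has an ``isometric logarithm''. First I would recall that $\Iso(n) = \bbR^n \rtimes O(n)$, and that $T_t R$ is an orientation-preserving isometry (both $T_t$ and $R \in SO(n)$ are), hence lies in the identity component $\Iso^+(n) = \bbR^n \rtimes SO(n)$, which is a connected Lie group. Next I would use the classical normal form for orientation-preserving rigid motions of $\bbR^n$: after an orthogonal change of coordinates, $T_t R$ splits as a pure rotation on one subspace (block-diagonal into planar rotations by angles $\theta_j$, plus possibly a fixed subspace) together with a translation along a complementary axis (a ``screw motion''). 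On each rotation block choose the rotation-by-$x\theta_j$ flow, on the translation part choose the linear flow $x \mapsto x v$, and on the fixed part the trivial flow; assembling these gives a homomorphism $U_t\colon \bbR^+ \to SO(n) \subset \Iso(n)$ with $U_t(1) = T_t R$. (Equivalently: the exponential map of the Lie group $\Iso^+(n)$ is surjective, which for this solvable-by-compact group follows from surjectivity of $\exp$ for $SO(n)$ combined with the semidirect-product structure.)

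The remaining point worth a sentence is smoothness in $t$: I want the whole arc $P_t$, not just each individual $P_t$, to depend nicely on the parameter. This is easy here because $T_t = tT_0 + (1-t)T_1$ depends affinely (hence smoothly) on $t$, and $R$ is fixed; one can choose the logarithm / the flow $U_t(x)$ to depend smoothly on $t$ as well — for instance the angles $\theta_j$, the rotation planes, and the screw axis all vary smoothly with $T_tR$ as long as one tracks them continuously along the one-parameter family, and since the family $\{T_tR\}_{t\in[0,1]}$ is a small arc (the translation part is a bounded shift and $R$ is fixed) no eigenvalue collisions force any loss of regularity. Thus $P_t$ is a $C^\infty$ arc of homomorphisms.

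The main obstacle, and really the only nontrivial content, is the existence of the isometric logarithm of $T_tR$ — i.e.\ writing an arbitrary orientation-preserving rigid motion as the time-one map of a Killing flow. Once the screw-motion normal form is in hand this is routine, but it is the step where the hypothesis $R \in SO(n)$ (orientation preserving, so no reflection component to obstruct connectedness to the identity) is genuinely used. Everything else — the commuting splitting off of the homothety, the verification that $P_t$ is a homomorphism, and smoothness in $t$ — is bookkeeping.
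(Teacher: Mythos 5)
There is a genuine error at the very first step of your argument. You claim that $P_t(x) := \lambda^x \cdot U_t(x)$ is automatically a homomorphism because ``the two factors commute.'' A scalar commutes with every \emph{linear} map, but in $\Aff(n)$ the homothety $v \mapsto \lambda^x v$ does \emph{not} commute with translations, and your $U_t(x)$ carries a nontrivial translation part (the whole point of $T_t$ is that it is a nonzero translation). Writing $U_t(y)(v) = O_y v + b_y$, one computes
$$
\left(\lambda^x U_t(x)\right)\circ\left(\lambda^y U_t(y)\right)(v) \;=\; \lambda^{x+y} O_x O_y v + \lambda^{x+y} O_x b_y + \lambda^{x} b_x,
$$
while
$$
\lambda^{x+y}\,U_t(x+y)(v) \;=\; \lambda^{x+y} O_x O_y v + \lambda^{x+y} O_x b_y + \lambda^{x+y} b_x,
$$
and the translation parts differ by $(\lambda^{x}-\lambda^{x+y})\,b_x \neq 0$ whenever $b_x\neq 0$ and $y\neq 0$. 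Already for $n=1$ and $G(v)=\lambda(v+s)$ your recipe gives $P(x)(v)=\lambda^x(v+xs)$, and then $P(1/2)^2(v)=\lambda v+(\lambda^{1/2}+\lambda)\tfrac{s}{2}\neq \lambda v+\lambda s=P(1)(v)$. So the maps $E_t=P_t(1/k)$ you would extract do not satisfy $E_t^k=G_t$, which is precisely the property the paper needs. Note also that the step you single out as the ``only nontrivial content''---the screw-motion logarithm making $U_t$ itself a one-parameter subgroup of $\Iso(n)$---is not what the Proposition asks for; in the correct construction $U_t$ is \emph{not} a homomorphism in $x$, only $P_t$ is.

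The repair is short and uses the contraction hypothesis you never exploit. Since $G_t=\Lambda T_t R$ is an affine contraction with linear part $\lambda R$, it has a unique fixed point $p_t=(\Id-\lambda R)^{-1}\lambda c_t$ (with $c_t$ the translation vector of $T_t$), which depends affinely, hence smoothly, on $t$. In coordinates centered at $p_t$ the map $G_t$ is the linear similarity $v\mapsto\lambda Rv$. Choose a skew-symmetric $\Omega$ with $e^{\Omega}=R$ (surjectivity of $\exp\colon\mathfrak{so}(n)\to SO(n)$; this is where the logarithm of a rotation genuinely enters, but only for the linear part $R$, not for the affine isometry $T_tR$), and set
$$
P_t(x)(v) := \lambda^{x} e^{x\Omega}(v-p_t)+p_t .
$$
This is a homomorphism because it is conjugate, by the translation taking $p_t$ to the origin, to the product of the scalar flow and the rotation flow inside $GL(n)$, where the two factors really do commute; it satisfies $P_t(1)=G_t$; and it has the required form $\lambda^x\cdot U_t(x)$ with the isometry $U_t(x)(v)=e^{x\Omega}(v-p_t)+\lambda^{-x}p_t$. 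Smoothness of the arc in $t$ is then immediate since $\Omega$ is fixed and $p_t$ is affine in $t$.
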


Due to the homomorphism property, for any $t\in [0,1]$ $\lim_{k \to +\infty} P_t \left(\frac1k\right) = \Id$. Therefore, by choosing $k \in \bbN$ big enough, we can make $P_t \left(\frac1k\right)$ arbitrary close to identity. Now let $E_t := P_t \left(\frac1k\right)$. Note that
$$
E_t^k = \left(P_t \left(\frac1k\right)\right)^k = P_t\left(k \cdot \frac1k\right) = P_t(1) = G_t.
$$
Also note that $E_t$ is a contraction with the uniform rate $\lambda^{\frac1k} < 1$. By construction, the map $t \mapsto E_t$ is $C^\infty$-smooth.

Now for $i=0,1$ we can find regions~$B_i^{(0)}, \dots, B_i^{(k)}$ with the following properties:
\begin{enumerate}
  \item $B_i^{(0)} = B$;
  \item $E_i B_i^{(j)} \Supset B_i^{(j+1)}$;
  \item $B_0^{(k)} \cup B_1^{(k)} \Supset B$.
\end{enumerate}
We do this inductively: take $E_i B$, shrink it a little, denote the result by~$B_i^{(1)}$, then repeat $k-1$ times. This procedure works because of the gap in~\eqref{e:tls}.
%The former property holds because of~\eqref{e:tls}.

Let $A_i := \bigcup_{j=0}^{k-1} B_i^{(j)}$, $A := A_0 \cup A_1$.

\begin{prop}
$E_0 A \cup E_1 A \Supset A$.
\end{prop}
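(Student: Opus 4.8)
The plan is to verify the defining inclusion $E_0 A \cup E_1 A \Supset A$ directly from the three properties of the sets $B_i^{(j)}$, unwinding the definitions $A = A_0 \cup A_1$ and $A_i = \bigcup_{j=0}^{k-1} B_i^{(j)}$. Since $\Supset$ commutes with finite unions, it suffices to cover each $B_i^{(j)}$, $i \in \{0,1\}$, $0 \le j \le k-1$, by the interior of $E_0 A \cup E_1 A$.

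First I would split the index range. For $0 \le j \le k-2$, property (2) gives $E_i B_i^{(j)} \Supset B_i^{(j+1)}$, and since $B_i^{(j)} \subset A$ we get $E_i A \supset E_i B_i^{(j)} \Supset B_i^{(j+1)}$; as $j$ runs over $0,\dots,k-2$ this covers $B_i^{(1)}, \dots, B_i^{(k-1)}$, i.e. all of $A_i$ except the initial block $B_i^{(0)} = B$. So the only pieces not yet covered are $B_0^{(0)} = B$ and $B_1^{(0)} = B$, i.e. the single region $B$. Here I invoke property (3): $B_0^{(k)} \cup B_1^{(k)} \Supset B$. Now each $B_i^{(k)}$ is itself covered, because property (2) at level $j = k-1$ gives $E_i B_i^{(k-1)} \Supset B_i^{(k)}$ with $B_i^{(k-1)} \subset A_i \subset A$, hence $E_i A \Supset B_i^{(k)}$. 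Chaining these, $E_0 A \cup E_1 A \Supset B_0^{(k)} \cup B_1^{(k)} \Supset B$, and since $\Supset$ is transitive (if $\inter U \supset V$ and $\inter V \supset W$ then $\inter U \supset W$) we conclude $E_0 A \cup E_1 A \Supset B$. Combining with the first part, every $B_i^{(j)}$ with $0 \le j \le k-1$ lies in the interior of $E_0 A \cup E_1 A$, so their union $A$ does as well.

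I do not anticipate a genuine obstacle here; the statement is essentially a bookkeeping consequence of the inductive construction. The one point that deserves a word of care is the transitivity of $\Supset$ used to fold the level-$k$ sets back into $B$: one should note $B_i^{(k)}$ need not be a subset of $A$ (it is one step past the last set included in $A_i$), so it is genuinely necessary to route the covering of $B$ through property (3) and then cover $B_i^{(k)}$ separately via property (2), rather than hoping $B \subset E_i A$ for a single $i$. A second minor point is that all unions involved are finite, so the interior of the union is the union of interiors is not needed — only the trivial $\inter(U \cup V) \supset \inter U \cup \inter V$ — and the remark in the text that $\Supset$ commutes with finite unions handles the rest.
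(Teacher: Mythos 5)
Your proof is correct and is essentially the paper's own argument written out in more detail: both decompose $A$ into the blocks $B_i^{(j)}$, use property (2) to show $E_iA_i$ covers the shifted union $\bigcup_{j=1}^{k}B_i^{(j)}$, and recover the missing block $B=B_i^{(0)}$ via property (3) applied to $B_0^{(k)}\cup B_1^{(k)}$. The extra care you take about transitivity of $\Supset$ and about $B_i^{(k)}$ not lying in $A$ is sound but amounts to the same chain of inclusions the paper compresses into one display.
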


\begin{proof}
$$
E_0 A \cup E_1 A \supset E_0 A_0 \cup E_1 A_1 = E_0 \left(\bigcup_{j=0}^{k-1} B_0^{(j)}\right) \cup E_1 \left(\bigcup_{j=0}^{k-1} B_1^{(j)}\right) \Supset \bigcup_{j=1}^{k-1} B_0^{(j)} \cup \bigcup_{j=1}^{k-1} B_1^{(j)} \cup B = A.
$$
\end{proof}

Thus Theorem~\ref{t:affine-maps} is also proven.

\subsection{Nonlinear version}

\begin{theorem} \label{t:smooth-maps}
For any manifold~$M$ there exists a region $\hat A$ such that
for any $\eps > 0$ there exists a $C^\infty$-smooth arc of diffeomorphisms $\tilde f_t \colon [0,1] \times M \to M$ such that
\begin{enumerate}
  \item $\forall t \in [0,1]$ $\tilde f_t(\hat A) \Subset \hat A$;
  %REPLACED-2012-11-07%
  %\item $\forall t \in [0,1]$ $\norm{D \tilde f_t} < 1$; 
  \item $\forall t \in [0,1]$ $\forall x \in \hat A$ $\norm{D \tilde f_t (x)} < 1$;
  \item $\forall t \in [0,1]$ $\dist_{C^1} (\tilde f_t, Id) < \eps$;
  \item there exists a region $A \Subset \hat A$ such that $\tilde f_0(A) \cup \tilde f_1(A) \Supset A$.
\end{enumerate}
\end{theorem}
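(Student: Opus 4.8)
The plan is to transfer Theorem~\ref{t:affine-maps} from $\bbR^n$ to an arbitrary manifold $M$ by working inside a single chart, and then to localize the affine maps so they extend to global diffeomorphisms of $M$ which are the identity outside a small ball. First I would fix a coordinate chart $\psi \colon \hat U \to \bbR^n$ on $M$, where $n = \dim M$ and $\hat U$ is a small ball; I will use $\psi$ to pull back the region $A$ and the affine arc $E_t$ produced by Theorem~\ref{t:affine-maps} (applied with a yet-unspecified $\eps$). So long as the image of $A$ under all the $E_t$ stays well inside $\psi(\hat U)$ — which can be arranged, since $E_t$ is contracting and $\eps$-close to the identity — the pulled-back maps are diffeomorphisms from a neighborhood of the pulled-back $A$ into $\hat U$, contracting in the chart metric, with $\tilde f_0(A) \cup \tilde f_1(A) \Supset A$ inherited directly from item~(\ref{enum:am:2}).

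The next step is to extend these locally-defined contractions to diffeomorphisms $\tilde f_t$ of all of $M$, or at least of $[0,1]\times M \to M$ as stated. The standard device is a bump function: choose $\hat A$ to be a slightly larger region than $A$, still contained in the chart, take a smooth $\rho \colon M \to [0,1]$ equal to $1$ on a neighborhood of $\hat A$ and supported in $\hat U$, and define $\tilde f_t$ in the chart by $x \mapsto x + \rho(x)\bigl(E_t(x) - x\bigr)$, reading off $\tilde f_t = \mathrm{Id}$ outside $\hat U$. Because $E_t$ is $\eps$-close to the identity in $\Aff(n)$ and $\rho$ has bounded derivative (a fixed bump, chosen before $\eps$), the interpolated map is $C\eps$-close to the identity in $C^1$ for a constant $C$ depending only on $\rho$; hence for $\eps$ small it is a genuine diffeomorphism, it contracts the chart norm at every point of $\hat U$ (so item~(2) holds on $\hat A$), and on the region where $\rho \equiv 1$ it agrees with $E_t$, so $\tilde f_t(\hat A) \Subset \hat A$ and $\tilde f_0(A)\cup\tilde f_1(A)\Supset A$ both survive. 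The smoothness of $t \mapsto \tilde f_t$ is immediate from that of $t \mapsto E_t$.

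The main obstacle is the interplay between the two smallness requirements: $\eps$ in item~(3) must be chosen small \emph{after} fixing the chart and the bump $\rho$, yet the region $A$ coming from Theorem~\ref{t:affine-maps} is itself built to depend on $\eps$ (it is a union of $k \sim 1/\eps$ nested boxes, which could in principle grow and fail to fit in the chart). So the order of quantifiers must be handled with care: I would first fix $\hat A$ (and the chart, and $\rho$) \emph{independently of $\eps$} — this is why the theorem is stated as ``there exists $\hat A$ such that for any $\eps$ \dots'' — by observing that the box $B$ of Subsection~\ref{ss:box} and its iterates can be rescaled to lie in any prescribed neighborhood before the $k$-th-root construction, so that the whole region $A$ (for every $\eps$) sits inside a fixed compact subset of the chart. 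Once $\hat A$ and $\rho$ are pinned down, the constant $C = C(\rho)$ is fixed, and then for any given $\eps$ one simply applies Theorem~\ref{t:affine-maps} with tolerance $\eps/C$ and composes with $\psi^{-1}$ and the bump. The remaining verifications — that the chart norm and the Riemannian norm on $\hat A$ are comparable up to a factor absorbed into the choice of $\lambda < 1$, and that ``$\Supset$'' is preserved under the diffeomorphism $\psi^{-1}$ — are routine.
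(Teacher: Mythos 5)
Your route diverges from the paper's at the crucial point, and the divergence is where the gap sits. The paper does not extend the affine arc by the identity: it takes the time-$\tau$ map $S^\tau$ of the gradient flow of a Morse function, chooses $\hat A$ with $S^\tau(\hat A)\Subset\hat A$ by a Lyapunov-function argument, and glues $E_t$ (placed in a smaller ball $\check A\Subset\hat A$) to $S^\tau$, so that $\tilde f_t$ coincides with $S^\tau$ near $\partial\hat A$. The trapping property $\tilde f_t(\hat A)\Subset\hat A$ is then read off from the behaviour of $S^\tau$ on the boundary and is completely decoupled from the affine maps. In your construction $\tilde f_t=E_t$ on all of $\hat A$, so item (1) becomes the claim that $E_t(\hat A)\Subset\hat A$ for every $t$ --- and this is not among the conclusions of Theorem~\ref{t:affine-maps}, nor does it follow from ``$E_t$ is contracting and $\eps$-close to the identity''. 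Each $E_t=P_t(1/k)$ is a contraction by the factor $\lambda^{1/k}$ (tending to $1$ as $\eps\to 0$) composed with a rotation by an angle of order $1/k$ about a $t$-dependent fixed point $q_t$, and the points $q_0\neq q_1$ are a definite distance apart. On $\partial\hat A$ the inward displacement produced by the contraction is of order $(1-\lambda^{1/k})\sim|\ln\lambda|/k$, while the displacement produced by the rotation about the off-centre $q_t$ is of order $(1/k)\cdot|c-q_t|$ with a constant comparable to the full rotation angle of $R$, hence of order one. Since $\lambda$ is chosen close to $1$, the second effect dominates for a ball $\hat A$ that is merely ``slightly larger than $A$'': boundary points get pushed out, and $E_t(\hat A)\not\subset\hat A$. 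The claim can be rescued by taking $\hat A$ to be a ball whose radius exceeds a definite multiple (roughly the rotation angle divided by $|\ln\lambda|$) of $\max_t|c-q_t|$, and by checking that this radius is independent of $k$ (both effects scale as $1/k$, so it is); but that computation is the real content of item~(1) in your approach and is missing.

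Two further points. The assertion that the chart norm and the Riemannian norm are ``comparable up to a factor absorbed into the choice of $\lambda<1$'' does not work: the contraction factor of $E_t$ is $\lambda^{1/k}\to 1$, so no fixed multiplicative discrepancy between the two metrics can be absorbed; you must either arrange the Riemannian metric to coincide with the Euclidean one on the chart, or state item~(2) for an adapted metric (which is all the later arguments need). Finally, although the literal statement of Theorem~\ref{t:smooth-maps} is indifferent to what $\tilde f_t$ does outside $\hat A$, the paper later requires each fiber map to be a Morse--Smale diffeomorphism of $M$ (property~(1) of Definition~\ref{d:N-S-skpr}); extending by the identity destroys this, whereas the gradient-flow extension provides it for free. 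So even after repairing the trapping property, the identity extension would force modifications downstream.
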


\begin{proof}
Take a Morse function~$H$ on $M$ with the global minimum $p$. Consider the gradient flow of $H$. There exists a small $\tau > 0$ such that the time-$\tau$ shift $S^\tau$ along the flow is $\eps$-close to the identity map~$\Id$.

In a neighborhood of $p$, the gradient flow of $H$ can be linearized to a diagonal form. Now we work in these linear coordinates.  Let $\hat A$ be a diffeomorphic ball with the center at $p$ such that $S^\tau(\hat A) \Subset \hat A$. Such a ball can be obtained using the Lyapunov function method.

%REPLACED-2012-11-07%
%Take $\eps_1 > 0$ such that being $\eps_1$-close to $\Id$ in the linear coordinates guarantees $\eps$-closeness to $\Id$ in the original coordinates. Take some smaller ball $A \Subset \hat A$ if needed. Apply Theorem~\ref{t:affine-maps} in $\check A$ to construct the affine maps $E_t\colon A \to A$, $t \in [0,1]$ that are $\eps_1$-close to $\Id$.
Take $\eps_1 > 0$ such that being $\eps_1$-close to $\Id$ in the linear coordinates guarantees $\eps$-closeness to $\Id$ in the original coordinates. Take some smaller ball $\check A \Subset \hat A$. Apply Theorem~\ref{t:affine-maps} in $\check A$ to construct a region~$A \subset \check A$ and the affine maps $E_t\colon \check A \to \check A$, $t \in [0,1]$ that are $\eps_1$-close to $\Id$ and $E_0(A) \cup E_1(A) \Supset A$.

%REPLACED-2012-11-07%
%Now glue together $S$ and $E_t$. Namely, we can find maps $\tilde f_t$ such that
Now glue together $S^\tau$ and $E_t$. Namely, we can find maps $\tilde f_t$ such that
\begin{enumerate}
  %REPLACED-2012-11-07%
  %\item $\left.\tilde f_t\right|_{M \setminus \hat A} = S^\tau$;
  %\item $\left.\tilde f_t\right|_{A} = E_t$.
  \item $\left.\tilde f_t\right|_{M \setminus \hat A} = S^\tau$;
  \item $\left.\tilde f_t\right|_{\check A} = E_t$.
\end{enumerate}
%REPLACED-2012-11-07%
%This is possible because, by construction, the maps $E_t$ are isotopic to $S^\tau$ in $A$.
This is possible because, by construction, the maps $E_t$ are isotopic to $S^\tau$ in $\check A$. By choosing~$\check A$ small enough, we can ensure all $\tilde f_t$ are close to identity.
\end{proof}

\section{The skew product}  \label{s:construction}

To prove Theorem~\ref{t:skpr-circle}, we provide a single skew product $F_0 \in \mathcal C(M)$, and show that every $F \in \mathcal C(M)$ which is close enough to $F_0$ has a massive attractor.
%
%For the skew products over the circle, the strategy is the same: we construct some special skew product $\hat F$ and prove that any $F \in \mathcal C(M)$ sufficiently close to $\hat F$ has a massive attractor.
%In this Section, we describe the construction and give the proof for $k=1$, $m=4$.

%Let $p = m-1$ (note that $p \ge k+1$, therefore it is eligible for Theorem~\ref{t:skpr-symbolic}), and fix some $f_1, \dots, f_p$ provided by Lemma~\ref{l:f1f2exist}.
%Now consider the following arcs in the base circle:
%Let $L_1, \dots, L_p \subset S^1$ be non-intersecting closed arcs, the length of each arc being $\frac1m$.
%$$
%L_1 := \left[0;\frac14\right], \ L_2 := \left[\frac12;\frac34\right].
%$$

%For every $\varphi \in L_i$, we let $f_\varphi := f_i$. On the rest of the circle, we define $f_\varphi(x)$ in an arbitrary way\footnote{Not really.}.
%%We just ensure that the resulting $\hat F$ is a N-S skew product.
%The resulting $\hat F$ could be made $C^\infty$-smooth in $\varphi$ and $x$, if needed. We need it to be~$C^2$ t obe able to apply the results from~\cite{ArXiv-Ilyashenko2010a}. Now $\hat F$ is well defined.

Let $L_0, L_1 \subset S^1$ be disjoint closed arcs, with the length of each arc equal to $\frac1m$. Then for $i=0,1$ we have $h(L_i) = S^1$. Fix a small $\eps > 0$ and apply Theorem~\ref{t:smooth-maps} to get the $C^\infty$-arc of diffeomorphisms $\tilde f_t\colon [0,1]\times M \to M$.

Now consider a $C^\infty$ map $l \colon S^1 \to [0,1]$ such that
$$
l|_{L_0} = 0, \quad l|_{L_1} = 1.
$$
Let $f_\varphi := \tilde f_{l(\varphi)}$ in~\eqref{e:skew}. This map~\eqref{e:skew} is the desired $C^\infty$ smooth skew product $F_0$. Actually, we only need it to be $C^2$ to apply the results from~\cite{Ilyashenko2012}. By construction, for any small enough $\eps$ both $F_0$ and any $F \in \mathcal C(M)$ close enough to $F_0$ satisfy the \emph{modified dominated splitting condition}~\cite{Ilyashenko2012}[Definition 2], which we rewrite in the form
\begin{equation}    \label{e:mdsc}
\max \left(\frac1m + \left\|\frac{\partial f_\varphi^{\pm 1}}{\partial \varphi}\right\|_{C^0}, \left\|\frac{\partial f_\varphi^{\pm 1}}{\partial x}\right\|_{C^0} \right) = L < m.
\end{equation}

\section{The lift to a solenoid}  \label{s:solenoid}

%Copy from IKS, page 459. !!!

\subsection{Solenoid skew products}

For any $F \in \mathcal C(M)$ we consider its solenoid extension $\hat F$. Here we follow~\cite{Ilyashenko2008}[p.~459]. Recall the definition of the solenoid map. For any~$R \ge 2$, $\alpha \ll 1$ let
$$
D:= \{z\in\bbC\mid|z|\le R\}, \quad B:=S^1 \times D,
$$
and consider the solenoid map $H \colon B \to B$
$$
H\colon (\varphi,z) \mapsto (h \varphi, e^{2\pi i \varphi} + \alpha z), \quad \varphi \in S^1, \quad z \in D.
$$

For proper~$R, \alpha$ (depending only on $m$), $H$ is a 1-1 map onto its image. We fix these values of $R, \alpha$ and will never need them any more. The maximal attractor of $H$,
$$
\Lambda := \bigcap_{n \ge 0} H^n B,
$$
is called a \emph{Smale-Williams solenoid}. This set is invariant, locally maximal, and hyperbolic. The restriction~$H|_\Lambda$ is invertible.

Given $F \colon S^1 \times M \to S^1 \times M$, we define~$\hat F \colon B \times M \to B \times M$ by the following formula:
$$
\hat F \colon (b, x) \mapsto (H b, f_\varphi(x)), \quad b := (\varphi, z) \in B.
$$
%\begin{remark}
$\hat F$ is a skew product over~$H$ with fiber maps not depending on $z$. Thus it can be also viewed as a skew product over $h$, with the fibers~$D \times M$. Denote by~$\pi$ the projection along~$z$; $\pi \colon (\varphi, z, x) \mapsto (\varphi,x)$. Then~$F \circ \pi = \pi \circ \hat F$.
%\end{remark}

Obviously $\hat F$ has $\Lambda \times M$ as an invariant set. For $f_\varphi^{\pm 1}$ uniformly $C^1$-close to identity, $\Lambda \times M$ is partially hyperbolic with $\dim E^s = 2$, $\dim E^u = 1$, $\dim E^c = \dim M$.

\subsection{Generic endomorphisms of $S^1 \times M$}

The same construction works for any $\mathcal F \in C^1(X)$. Namely, for $(z,\varphi,x) \in B \times M$ let
$$
\hat{\mathcal F} (z,\varphi,x) = (e^{2\pi i \varphi} + \alpha z, \mathcal F (\varphi,x)).
$$
Note that for $C^1$-close $\FF,\GG$ we get $C^1$-close extensions $\hat\FF, \hat\GG$.

\section{Sink skew products and their attractors}  \label{s:two-sided-attractor}
%
%North-South maps are the simplest hyperbolic diffeomorphisms of a sphere~$S^k$. They have exactly two fixed points, one is repelling (North Pole) and another is attracting (South Pole), both fixed points are hyperbolic. Following~\cite{Ilyashenko2010}, we give a generic definition of North-South skew products in the following way.

Let $q\colon B \to B$ be a dynamical system and $F \colon B \times M \to B \times M$ be a skew product over $q$.

\begin{definition}  \label{d:N-S-skpr}
A skew product $F$ is called a \emph{sink skew product} if there exists a \emph{sink region}~$S\subset M$ diffeomorphic to a closed ball~$D^n$ such that $\forall b \in B$ the fiber map $f_b$ has the following properties:
\begin{enumerate}
  \item\label{enum:3:1} $f_b$ is a Morse-Smale diffeomorphism;
  \item\label{enum:3:2} $\exists! \,x \in \Fix (f_b)$ within $\inter S$ and this $x$ is an attracting fixed point;
%  \item\label{enum:3:3} all the repellers of $f_b$ lie strictly inside $R$;
  \item\label{enum:3:4} $f_b$ brings $S$ strictly into itself and is contracting on $S$ uniformly in $b$;
  \item\label{enum:3:5} all inverse maps $f_b^{-1}$ are expanding on $S$ uniformly in $b$;
%  \item\label{enum:3:6} all inverse maps $f_b^{-1}$ bring $R$ strictly into itself and are contracting on $R$ uniformly in $b$;
%  \item\label{enum:3:7} all maps $f_b$ are expanding on $R$ uniformly in $b$;
  \item\label{enum:3:8} the maps $f_b$ and $f_b^{-1}$ depend continuously on $b$ in the $C^0$ topology.
\end{enumerate}
\end{definition}

%We will often abbreviate North-South as N-S.

%\begin{remark}[for myself!!!] Is $C^0$ enough? Maybe we need at least $C^1$ here. \textbf{Answer:} Yes, because HPS gives only $C^0$, and Gorodetski adds H\"older continuity only.
%\end{remark}

%\begin{definition}
%A N-S skew product is called \emph{A-affine} if all the restrictions
%$$
%f_b|_A, \quad f_b^{-1}|_A %, \quad f_b|_R, \quad f_b^{-1}|_R
%$$
%are affine maps (in the local coordinates of the cube~$A$).
%\end{definition}
%
%Additionally, for a smooth uniformly hyperbolic dynamics $q\colon B\to B$ in base we can define the notion of partial hyperbolicity. Let $\lambda^-$ and $\lambda^+$ be the uniform rates of the contraction and expansion of the base dynamics.
%
%\begin{definition}
%A N-S skew product is called \emph{partially hyperbolic} if the fiber dynamics is dominated by the base dynamics: there exists $\eps > 0$ such that
%$$
%\lambda^- + \eps < \abs{(f_b^{\pm 1})'} < \lambda^+ - \eps \quad \forall b \in B.
%$$
%\end{definition}

Apparently, as the region~$B \times S$ is trapping (property~\ref{enum:3:4}), it admits a maximal attractor $\Amax = \bigcap_{n \ge 0} F^n (B \times S)$. It is a closed invariant set: $F^{-1} (\Amax) = \Amax$. Now assume $q$ is invertible. In this case, $\Amax$ has a really simple form.

\begin{lemma}   \label{l:two-sided-attr}
    Let $F$ be a sink skew product over an invertible $q\colon B\to B$. Let $P$ be an ergodic invariant measure on $B$. Consider $\Amax := \bigcap_{n=0}^{\infty} F^n (B \times S)$. Then
    \begin{enumerate}
        \item $\Amax$ is the graph $\Gamma$ of a continuous function $\gamma = \colon B\to S$. The projection $j|_\Gamma \colon \Gamma \to B$ is a bijection. Under this bijection, $F|_\Gamma$ is conjugated to the dynamics in the base.
        \item\label{enum:2:2} $\Amin = \Gamma$.
        \item There exists an ergodic SRB measure $\mu_\infty$ in $B \times S$. Its basin contains $B\times S$. This measure is supported on~$\Gamma$ and is precisely the pull-back of the measure~$P$ under the bijection $j|_\Gamma \colon \Gamma \to B$. Therefore, $\supp\mu_\infty = \Gamma$.
    \end{enumerate}
\end{lemma}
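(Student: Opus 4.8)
The plan is to build the invariant graph $\Gamma$ by a graph-transform/contraction argument, exactly as in the Hadamard–Perron theorem for normally contracting invariant manifolds, and then to transport the base measure $P$ onto it. First I would set up the space $\mathcal G$ of continuous sections $\gamma \colon B \to S$, equipped with the sup-metric; since $S$ is a closed ball and hence complete, $\mathcal G$ is a complete metric space. The skew product $F$ over the invertible base map $q$ induces a graph transform $\Phi \colon \mathcal G \to \mathcal G$ by $(\Phi\gamma)(b) := f_{q^{-1}b}\bigl(\gamma(q^{-1}b)\bigr)$; this is well-defined because property~\ref{enum:3:4} guarantees $f_b(S) \subset \inter S$, and continuous in $b$ because of property~\ref{enum:3:8} together with continuity of $q^{-1}$. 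The uniform contraction of the fiber maps on $S$ (property~\ref{enum:3:4}) gives $\dist(\Phi\gamma_1, \Phi\gamma_2) \le \kappa \,\dist(\gamma_1,\gamma_2)$ with $\kappa < 1$ independent of $b$, so by the Banach fixed point theorem there is a unique fixed section $\gamma$; its graph $\Gamma$ is $F$-invariant, $F(\Gamma) = \Gamma$, and $\Gamma \subset B \times S$. Intersecting with $\bigcap_n F^n(B\times S)$: since $f_b$ contracts $S$ with a uniform rate, $F^n(B\times S)$ is a nested sequence of graphs of sections whose fiber-diameters shrink geometrically, and these converge uniformly to $\gamma$; hence $\Amax = \Gamma$. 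The projection $j|_\Gamma$ is a bijection onto $B$ with inverse $b \mapsto (b,\gamma(b))$, and conjugates $F|_\Gamma$ to $q$ since $F(b,\gamma(b)) = (qb, f_b(\gamma(b))) = (qb, \gamma(qb))$.

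For part~\ref{enum:2:2}, I would identify $\Amin$ as the minimal attractor, i.e.\ the intersection of the maximal attractor with the closure of the union of forward limit sets; since $\Gamma$ is already a single continuous graph carrying a copy of the base dynamics, and every forward orbit in $B\times S$ is asymptotic to $\Gamma$ (by the uniform fiber contraction, the fiber-distance of $F^n(b,x)$ to $\Gamma$ tends to $0$ uniformly), we get $\Amin = \Gamma$ as well. This is the same nested-graphs estimate used above, phrased in terms of $\omega$-limit sets.

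For part~(3), define $\mu_\infty := (j|_\Gamma)^* P = (\mathrm{id}, \gamma)_* P$, the pushforward of $P$ under $b \mapsto (b,\gamma(b))$. It is $F$-invariant because $j|_\Gamma$ conjugates $F|_\Gamma$ to $q$ and $P$ is $q$-invariant; it is ergodic because ergodicity is preserved under the conjugacy; and $\supp \mu_\infty = \Gamma$ since $\gamma$ is continuous and, in the relevant application, $\supp P = B$ (more precisely $\supp\mu_\infty$ is the graph of $\gamma$ over $\supp P$, which is stated here for $P$ of full support). The SRB property is the key remaining point: for $x \in B \times S$ write $x = (b, x_0)$ and $F^n(x) = (q^n b, x_n)$ with $\dist(x_n, \gamma(q^n b)) \le C\theta^n \to 0$ for a uniform $\theta < 1$. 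For a test function $\psi \in C(B\times S)$, uniform continuity gives $\bigl|\psi(F^n x) - \psi(q^n b, \gamma(q^n b))\bigr| \to 0$, so the Birkhoff averages of $\psi$ along the $F$-orbit of $x$ and along the $q$-orbit of $b$ evaluated through $\gamma$ differ by a Cesàro-vanishing term. Applying Birkhoff's ergodic theorem to $(q, P)$ — valid for $P$-a.e.\ $b$, hence for a full-measure, and in the application positive-Lebesgue-measure, set of $x$ — yields $\frac1n\sum_{i<n}\psi(F^i x) \to \int \psi\circ j|_\Gamma^{-1}\,dP = \int \psi\,d\mu_\infty$. This holds for every $x \in B\times S$ whose base point $b$ is $P$-generic; since in our setting $P$ is the natural measure for the solenoid whose generic set has full measure in the relevant transversal, $E(\mu_\infty) \supset B \times S$.

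The main obstacle is the SRB statement, specifically making precise that the basin contains \emph{all} of $B \times S$ (not merely a positive-measure subset): this requires that the set of base-points $b$ that are Birkhoff-generic for $(q,P)$ be large enough — in the solenoid application, generic within each unstable fiber — so that its product with $S$, together with the uniform fiber-contraction collapse onto $\Gamma$, sweeps out $B \times S$ up to the inevitable $P$-null set; one must also check that the convergence in Definition~\ref{d:srb} is genuinely pointwise on that set and not just in measure. The graph-transform half of the argument is routine Hirsch–Pugh–Shub normally hyperbolic theory and I would state it without grinding through the standard estimates, citing \cite{HPS1977}.
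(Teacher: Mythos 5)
Your graph-transform construction of $\Gamma$ (contraction on the space of continuous sections $B\to S$, nested images $F^n(B\times S)$ with geometrically shrinking fibers) followed by the pushforward of $P$ onto the graph is exactly the standard folklore argument that the paper itself does not write out but delegates to \cite{Ilyashenko2010}[Subsections 2.3--2.5], so your proposal is correct and matches the intended proof. The one caveat you flag yourself --- that the basin can only contain $B\times S$ up to the Lebesgue-null set of base points that are not Birkhoff-generic for $(q,P)$ --- is an imprecision already present in the lemma's statement (and in the paper's notion of basin), not a gap introduced by your argument.
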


\begin{remark}
The statement~(\ref{enum:2:2}) means that all reasonable definitions of an attractor coincide with $\Gamma$, see~\cite{Ilyashenko2010}(12). In particular, $\Amin = \Astat = A_M = \Amax$.
\end{remark}

Lemma~\ref{l:two-sided-attr} is rather elementary, and appears to be folklore. The proof can be found, for instance, in~\cite{Ilyashenko2010}[Subsections 2.3--2.5]. Though \cite{Ilyashenko2010} state their results only for $B = \Sigma$\footnote{$\Sigma$ is the bi-infinite analog of $\Sigma^+$. The shift~$\sigma\colon\Sigma\to\Sigma$ is invertible.}, the same argument works perfectly for any invertible $q\colon B\to B$ in the base. Also, formally speaking, they deal not with sink skew products, but with their specific subset, so-called North-South skew products. But the same proofs extend easily to sink skew products.
%In the following Lemma~\ref{l:two-sided-attr}, we summarize the relevant results from~\cite{Ilyashenko2010}.

%REPLACED-2012-11-07%
%By construction, any $F$ close enough to $F_0$ and any $\hat F$ close enough to $\hat F_0$ are sink skew products with $S = \check A$. Therefore Lemma~\ref{l:two-sided-attr} applies to $\hat F$.
By construction, any $F$ close enough to $F_0$ and any $\hat F$ close enough to $\hat F_0$ are sink skew products with $S = \hat A$. Therefore Lemma~\ref{l:two-sided-attr} applies to $\hat F$.

\section{Projection of the attractor is dense within a strip} \label{s:density}

%\subsection{The projection of the attractor}   \label{s:attractor-projection}

Note that the projection~$\mu = \pi_* \mu_\infty$ of the SRB measure~$\mu_\infty$ for $\hat F$ is an SRB measure for $F$; $\supp\mu = \pi\Gamma$.
%\subsection{Proof OF WHAT?}
%
%Take any $F \in \mathcal C(M)$ which is close enough to $\hat F$. Note that $h(L_i) = S^1$, $i=1,\dots,p$, and also for any $\varphi_i \in L_i$ the map $f_{\varphi_i}$ is $C^1$-close to $f_i$.
%
%Thus we can literally repeat the proof we gave in Section~\ref{s:density}. Again, for any $\varphi \in S^1$ we find $\varphi_1 \in L_1, \dots, \varphi_p \in L_p$ such that $h(\varphi_i) = \varphi$. As $\varphi_i \in L_i$, the maps $f_{\varphi_i}$ are close to $f_i$.
%
%Theorem~\ref{t:skpr-circle} is proven.
Take $A \Subset S$ from Theorem~\ref{t:smooth-maps}.
\begin{prop} \label{p:strip}
For any $F \in \mathcal C(M)$ which is $C^1$-close enough to $F_0$ we have
\begin{equation}
S^1 \times A \subset \pi \Gamma.
\end{equation}
\end{prop}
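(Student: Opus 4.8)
The plan is to show that the graph $\Gamma = \Gamma(\hat F)$ of the invariant continuous section $\gamma\colon B \to S = \hat A$ projects onto a set containing the whole strip $S^1 \times A$. First I would recall that, by Lemma~\ref{l:two-sided-attr}, $\Gamma = \bigcap_{n \ge 0} \hat F^n(B \times \hat A)$, so a point $(\varphi, x) \in S^1 \times \hat A$ lies in $\pi\Gamma$ precisely when there is a full backward $\hat F$-orbit in $B \times \hat A$ whose last fiber coordinate is $x$; equivalently, writing out the skew-product structure over $h$, the point $x$ must be of the form $f_{\varphi_{-1}} \circ f_{\varphi_{-2}} \circ \cdots \circ f_{\varphi_{-n}}(\hat A)$-reachable for every $n$, along some admissible itinerary $\ldots, \varphi_{-2}, \varphi_{-1}$ in the base with $h\varphi_{-1} = \varphi$. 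So the strategy reduces to a controllability statement for the nonautonomous iterated function system $\{f_\varphi\}$ in the fiber.

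The key step is to exploit the arcs $L_0, L_1 \subset S^1$ on which $f_\varphi = \tilde f_0$ and $f_\varphi = \tilde f_1$ respectively, together with property~(4) of Theorem~\ref{t:smooth-maps}, namely $\tilde f_0(A) \cup \tilde f_1(A) \Supset A$. Since $h(L_i) = S^1$, for \emph{any} target $\varphi \in S^1$ and any choice $i \in \{0,1\}$ there is a preimage $\varphi_{-1} \in L_i$ with $h\varphi_{-1} = \varphi$; iterating, for any finite binary word $w = (i_1, \ldots, i_n)$ there is an admissible backward itinerary of length $n$ ending at $\varphi$ whose $k$-th step lies in $L_{i_k}$. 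Along such an itinerary the composition of fiber maps is exactly $\tilde f_{i_1} \circ \tilde f_{i_2} \circ \cdots \circ \tilde f_{i_n}$. Now the relation $\tilde f_0(A) \cup \tilde f_1(A) \Supset A$ says precisely that the two-map IFS $\{\tilde f_0, \tilde f_1\}$ satisfies the covering condition on $A$, so its attractor (the intersection of all images of $\hat A$, or equivalently the closure of $\bigcup_w \tilde f_w(\text{pt})$) contains $A$; more carefully, a standard nested-covering argument shows $A \subset \bigcap_{n} \bigcup_{|w| = n} \tilde f_w(\hat A)$. Combined with the previous paragraph, every $x \in A$ is realized as a backward-orbit fiber coordinate over every $\varphi$, hence $S^1 \times A \subset \pi\Gamma$.

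For robustness under $C^1$-perturbation $F$ of $F_0$, I would note that the covering relation $\tilde f_0(A) \cup \tilde f_1(A) \Supset A$ is an open ($\Supset$ is strict) condition, so it persists; but a perturbed $F$ no longer has $f_\varphi$ literally constant on $L_i$, and the fiber maps over $L_i$ are only $C^1$-close to $\tilde f_i$. The fix is to shrink the arcs slightly to $L_i' \Subset L_i$ and keep the \emph{strict} covering $g_{0}(A) \cup g_{1}(A) \Supset A$ for every pair of maps $g_i$ that are $C^0$-close to $\tilde f_i$ — which holds for $F$ close enough to $F_0$, uniformly in $\varphi \in L_i'$. The backward-itinerary construction still works because $h(L_i') = S^1$ as long as $L_i'$ has length still exceeding... no: $L_i'$ shorter than $1/m$ means $h(L_i') \ne S^1$. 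So instead I would not shrink the base arcs but rather observe that on all of $L_i$ the perturbed fiber map is $C^0$-close to $\tilde f_i$, and use a quantitative version of the covering condition: choose $\eps$ so small in Theorem~\ref{t:smooth-maps} that $\tilde f_0(A), \tilde f_1(A)$ cover $A$ with room to spare, then any $C^0$-perturbation within that margin still covers. The surjectivity $h(L_i) = S^1$ is unaffected since $h$ is fixed.

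The main obstacle I anticipate is making the nested-covering argument fully rigorous in the \emph{nonautonomous} setting: the composition along a backward itinerary interleaves the two maps in an order dictated by the itinerary, and one must verify that the covering property $\tilde f_0(A) \cup \tilde f_1(A) \Supset A$ — which is a statement about a single application of the two maps — bootstraps correctly to arbitrarily long compositions so that $\bigcap_n \bigcup_{|w|=n} \tilde f_w(\hat A) \supset A$. This is the content of the inductive construction of the regions $B_i^{(j)}$ in Subsection~\ref{ss:box} (``because of the gap in~\eqref{e:tls}''), reused here: one shows inductively that $A \subset \bigcup_{|w|=n}\tilde f_w(A) + (\text{small})$, using that each $\tilde f_i$ is a contraction so the images shrink, while the union still covers $A$ at every level. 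Once that is in hand, intersecting over $n$ and combining with the itinerary surjectivity gives the claim; the perturbative part is then just absorbing the $C^0$-error into the covering margin.
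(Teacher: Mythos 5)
Your proposal is correct and follows essentially the same route as the paper: the same induction on the length of backward itineraries through $L_0$ and $L_1$, driven by the same robustly persistent one-step covering relation $f_{\varphi_0'}(A)\cup f_{\varphi_1'}(A)\supset A$ (the paper's~\eqref{e:per-incl}), which bootstraps to arbitrary length exactly by the short induction you sketch, so the ``obstacle'' you anticipate is not one. The only difference is bookkeeping: the paper covers $A_\varphi$ by balls of radius $\lambda^{n-1}|A|$ centred at points of $\Gamma_\varphi$ and concludes via density plus closedness of $\pi\Amax$, whereas you track images of $\hat A$ under compositions and use the nested-intersection description of $\Gamma$ directly.
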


Let $\frac12 < \lambda < 1$ be the uniform upper rate of contraction of the fiber maps in the sink~$S$:
$$
\lambda = \sup_{\varphi \in S^1, \ x \in S} \norm{Df_\varphi(x)}.
$$
We also denote by $S_\varphi$ and $A_\varphi$ the regions~$S$ and $A$ in the fiber over $\varphi$:
$$
S_\varphi := \{ \varphi \} \times S, \ A_\varphi := \{ \varphi \} \times A,
$$
and let $\Gamma_\varphi := S_\varphi \cap \pi\Amax$. We write $\abs{A} := \diam A$.

The proof will be carried out by induction. Namely, we will show that for any $\varphi \in S^1$ and any $n \in \bbN$ there exists a finite covering of $A_\varphi$ by the balls of radius $\lambda^{n-1} \cdot \abs{A}$ whose centers lie in the set $\Gamma_\varphi$. As $n$ could be taken arbitrarily large, this will prove the density of $\Gamma_\varphi$ in $A_\varphi$ for any $\varphi \in S^1$ and thus the density of $\pi\Amax$ in $S^1\times A$. Finally, the fact that $\pi\Amax$ has to be a closed set finishes the proof of the Proposition.

\subsection*{Base: $n = 1$}
Any point from the non-empty set $\Gamma_\varphi$ suffices as the center of a single ball of radius $\abs{A}$, which clearly contains $A_\varphi$.

\subsection*{Step} Assume that for some $n \in \bbN$ and for any $\varphi \in S^1$, there exists a finite covering of $A_\varphi$ by the balls of the radius $\lambda^{n-1} \cdot \abs{A}$ with centers in the set $\Gamma_\varphi$. Denote by~$O_\varphi$ the set of all centers of the balls.

Take any $\varphi' \in S^1$. There exist two preimages of $\varphi'$, $\varphi_0'$ and $\varphi_1'$, within $L_0$ and $L_1$ respectively:
$$
h \varphi_i' = \varphi', \quad \varphi_i' \in L_i.
$$

Because of the invariance of $\Amax$ and $\pi\Amax$, we have
$$
\Gamma_{\varphi'} = \bigcup_{\varphi \in h^{-1} \varphi'} f_{\varphi} (\Gamma_{\varphi}) \supset f_{\varphi_0'} (\Gamma_{\varphi_0'}) \cup f_{\varphi_1'} (\Gamma_{\varphi_1'}).
$$

We know that every $f_\varphi$ contracts $S_\varphi$ with the uniform upper rate of $\lambda<1$. This observation, combined with the induction assumption, gives us that the balls of radius $\lambda^n \cdot \abs{A}$ with the centers in $f_{\varphi} (O_{\varphi})$ constitute a covering of the regions $f_{\varphi} (A_{\varphi}) \subset S_{\varphi'}$, $\varphi \in h^{-1} \varphi'$.
%we may take the images of the centers of the balls $f_{i\varphi'} (\Gamma_{i\varphi'})$ contains a $l^n \cdot \abs{A}$-net for the segment $f_{i\varphi'} (A_{i\varphi'}) \subset A_{\varphi'}$, $i = 1, 2$.
Now note that the maps $f_{\varphi_i'}$ are $C^1$-close to the maps $f_i$ originally generated by Theorem~\ref{t:smooth-maps}, and therefore
\begin{equation}    \label{e:per-incl}
A \subset f_{\varphi_0'} (A) \cup f_{\varphi_1'} (A),
\end{equation}
%see (\ref{enum:1:2}) from Lemma~\ref{l:f1f2exist}.
Thus the balls of radius $\lambda^n \cdot \abs{A}$ with the centers in $f_{\varphi_0'} (O_{\varphi_0'}) \cup f_{\varphi_1'} (O_{\varphi_1'})$ cover the whole region $A$.
%$\Gamma_{\varphi'}$ contains a $l^n \cdot \abs{A}$-net for the whole segment $A$.

The induction step is complete, and Proposition~\ref{p:strip} is proven.

%\subsubsection*{Case 2: $m = 2$ and $\varphi' \in q_2 TR$.} This tiny case is more technical to deal with. The choice of transition arcs $TR$ guarantees that for any $0 < n \le N$, $q_2^{-n} \varphi' \cap TR = \emptyset$.

\section{From a strip to the whole attractor}    \label{s:strip-attr}

In this section we finish the proof of Theorem~\ref{t:skpr-circle}. From section~\ref{s:density} we know that
$$
S^1 \times A \subset \supp\mu.
$$
As $\mu$ is ergodic, its support is a transitive invariant set. Therefore
$$
\supp\mu = \bigcup_{n = -\infty}^{+\infty} F^n (S^1 \times A).
$$
For any finite $N \in \bbN$, the set $\bigcup_{n = -N}^{N} F^n (S^1 \times A)$ is the closure of a non-empty connected open set. Thus this is true for the whole $\supp \mu$. What remains to be proven is that there exists an open $U \supset \supp\mu$ such that
$$
\supp\mu = \bigcap_{n=0}^{+\infty} F^n (U).
$$

In the solenoid extension~$\hat F$, the set $\Gamma$ is a hyperbolic attractor with a uniform rate of contraction along the fibers. For a small $\eps > 0$ let
$$
\hat U_\eps := \{ (z,\varphi,x) \mid \dist_M(x,\gamma(z,\varphi)) < \eps \}.
$$
This set is open, and $U := \pi \hat U_\eps$ is an open neighborhood of $\supp\mu$. In the solenoid extension,
$$
\Gamma = \bigcap_{n=0}^{+\infty} \hat F^n \left(\hat U_\eps\right).
$$
Thus
$$
\supp\mu = \pi\Gamma = \bigcap_{n=0}^{+\infty} \pi \hat F^n \left(\hat U_\eps\right) = \bigcap_{n=0}^{+\infty} F^n \left(U\right).
$$
The proof of Theorem~\ref{t:skpr-circle} is complete.

\section{Perturbation in the space of all smooth endomorphisms}    \label{s:endo}

In this section we prove our main Theorem~\ref{t:endos}. Consider any map~$\FF \in C^1(X)$ which is $C^1$-close to $F_0$.
We want to show that $\FF$ has a massive attractor.

Note that though $F_0$ is a skew product, $\FF$ is usually not. The straight vertical foliation is no longer invariant, so the skew product structure is lost. However, for small enough perturbations of $F_0$, we are able to rectify the perturbed map and recover the structure. Here the  partially hyperbolic techniques from~\cite{HPS1977}, \cite{Gorodetskiui2006}, \cite{Ilyashenko2012} come into play.

First of all, we consider the solenoidal extensions of $F_0$ and $\FF$. Denote them by~$\hat F_0$ and $\hat \FF$. Their fiber maps are independent of~$z$. According to Section~\ref{s:solenoid}, they are $C^1$-close. In the rest of this section we use the notations of Section~\ref{s:solenoid}.

We now invoke the following theorem from~\cite{Ilyashenko2008}[Theorem~5] (which is in fact a modified version of theorems~\cite{Ilyashenko2012}[Theorem~2, Theorem~1]):
\begin{theorem} \label{t:iks}
Suppose that the fiber maps of $\hat F_0$ satisfy the modified dominated splitting condition~\eqref{e:mdsc}.
Then there exists a $\rho>0$ such that any map $\GG$ that is $\rho$-close to $\hat F_0$
in $C^1$,
$$
\dist_{C^1}(\hat F_0^{\pm 1},\GG^{\pm 1}) \le \rho,
$$
has the following properties:

(a) There exists a semiconjugacy $p \colon S^1 \times D \times M \to S^1$ such that the diagram
$$
\begin{CD}
S^1 \times D \times M @>{\GG}>> S^1 \times D \times M \\
@VpVV @VVpV \\
S^1 @>h>> S^1 \\
\end{CD}
$$
commutes. Moreover, the map
$$
\hat\HH \colon S^1 \times D \times M \to S^1 \times D \times M, \quad \hat\HH (\varphi,z,x) := (p(\varphi,z,x),z,x)
$$
is a homeomorphism.

(b) The fibers of $p$,
$$
\tilde N_\varphi := p^{-1}(\varphi),
$$
are the graphs of smooth maps
$$
\tilde\beta_\varphi \colon D \times M \to S^1.
$$
The maps $\tilde\beta_\varphi$ are
% H\"older
continuous in $\varphi$.
%$$
%\abs{\tilde\beta_\varphi - \tilde\beta_{\varphi'}} \le C |\varphi - \varphi'|^\alpha.
%$$
\end{theorem}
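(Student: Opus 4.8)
The statement is, verbatim, Theorem~5 of~\cite{Ilyashenko2008}, which in turn packages the partially hyperbolic perturbation results of~\cite{HPS1977}, \cite{Gorodetskiui2006}, \cite{Ilyashenko2012}; so the ``proof'' is really a citation, but let me indicate the mechanism. The plan is: produce, for every $C^1$-small perturbation $\GG$ of $\hat F_0$, a persistent $\GG$-invariant ``vertical'' foliation whose leaves are graphs over $D\times M$; read off $p$ from the leaf space, using structural stability of expanding maps to replace the induced base map by $h$ on the nose; and then check that $\hat\HH$ simply straightens that foliation.

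\emph{Step 1: the unperturbed normal hyperbolicity.} First I would view $\hat F_0$ as a skew product over the linear expanding map $h$ with fiber $D\times M$, and observe that the trivial vertical foliation $\mathcal N_0=\{\{\varphi\}\times D\times M\}$ is $\hat F_0$-invariant. Transverse to it, the base is expanded with rate $m$, while along the fibers every rate --- the strong contraction $\alpha$ in $z$ and the norms of $Df_\varphi^{\pm 1}$ in $x$ --- is bounded by $L<m$ by~\eqref{e:mdsc}. Hence $T(D\times M)$ is a dominated (center-stable) subbundle with a one-dimensional unstable complement, and $\mathcal N_0$ is the center-stable foliation of a partially hyperbolic skew product. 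The point of the \emph{modified} dominated splitting condition~\eqref{e:mdsc} (rather than plain domination) is that it is exactly the quantitative input that keeps the graph-transform estimates of the next step uniform over a $C^1$-neighborhood.

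\emph{Step 2: persistence of the foliation --- the hard part.} Next I would invoke the Hirsch--Pugh--Shub persistence theorem in the skew-product form of~\cite{Ilyashenko2012}: there is $\rho>0$ so that every $\GG$ with $\dist_{C^1}(\hat F_0^{\pm1},\GG^{\pm1})\le\rho$ carries a continuous $\GG$-invariant foliation $\tilde{\mathcal N}$ with $C^1$ leaves, each leaf $C^1$-close to a fiber $\{\varphi\}\times D\times M$, the leaves varying continuously with their index. I expect the genuine work to live here: making the graph transform a contraction on the space of $C^1$ sections over the compact $D\times M$, uniformly for all such $\GG$, which is precisely where one must reproduce the estimates of~\cite{Ilyashenko2012} out of~\eqref{e:mdsc}, and where one must check that $C^1$ regularity of the leaves survives even though $\GG$ is only $C^1$. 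Since a $C^1$-small perturbation of the graph of a constant function over a compact base is again a graph --- and the relevant invariant set sits in the interior of $B\times M$ because $H$ maps $B$ strictly inside itself --- each leaf has the form $\tilde N_\varphi=\{(\tilde\beta_\varphi(z,x),z,x)\}$ for a $C^1$ map $\tilde\beta_\varphi\colon D\times M\to S^1$ depending continuously on $\varphi$; this is assertion~(b).

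\emph{Step 3: reading off $p$ and $\hat\HH$.} Finally I would collapse the leaves: the quotient of $\GG$ by $\tilde{\mathcal N}$ is a degree-$m$ expanding self-map $\bar\GG$ of $S^1$ that is $C^1$-close to $h$, hence topologically conjugate to $h$ by the structural stability of expanding maps, $g\circ\bar\GG=h\circ g$. Let $p$ be the leaf projection followed by $g$; then $p$ is continuous, $p\circ\GG=h\circ p$, so the diagram commutes. For $\hat\HH(\varphi,z,x)=(p(\varphi,z,x),z,x)$: fix $(z,x)\in D\times M$; the map $\varphi\mapsto\tilde\beta_\varphi(z,x)$ is a continuous bijection of the compact circle --- injective because distinct leaves are disjoint, surjective because $\tilde{\mathcal N}$ is a foliation --- hence a homeomorphism, and therefore so is $\varphi\mapsto p(\varphi,z,x)$. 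Together with continuity in $(z,x)$ this shows $\hat\HH$ is a homeomorphism, which is assertion~(a).
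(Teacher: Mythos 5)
The paper gives no proof of Theorem~\ref{t:iks} at all: it is imported verbatim as Theorem~5 of~\cite{Ilyashenko2008} (a variant of Theorems~1--2 of~\cite{Ilyashenko2012}), which is exactly what you identify, and your three-step sketch (normal hyperbolicity of the vertical foliation from~\eqref{e:mdsc}, persistence of the foliation by the Hirsch--Pugh--Shub graph transform, and collapsing the leaves to obtain $p$ and $\hat\HH$) is a faithful outline of how the cited result is actually established. So your proposal is correct and takes essentially the same route as the source the paper relies on.
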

Actually, the maps $\tilde\beta_\varphi$ are even H\"older continuous in $\varphi$ but we do not use this fact.

The following statement is proven in Subsection 5.2 of~\cite{Ilyashenko2008}.
\begin{theorem} \label{t:iks52}
In the setting of Theorem~\ref{t:iks}, suppose $\GG = \hat\FF$ is a solenoidal extension of $\FF$ that is $\rho$-close to $F_0$. Then the maps~$p(\varphi, z, x)$ and $\tilde\beta_\varphi(z,x)$  are independent of~$z$.
\end{theorem}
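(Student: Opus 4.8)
The plan is to exploit the fact that $\hat\FF$ is a skew product over the solenoid map $H$ whose fiber maps depend only on $\varphi$ (not on $z$), together with the uniqueness of the semiconjugacy $p$ provided by Theorem~\ref{t:iks}. The key observation is that the construction of $p$ in \cite{Ilyashenko2008} proceeds by a contraction/fixed-point argument in the space of continuous (or H\"older) sections, so $p$ is the \emph{unique} object with the semiconjugacy and graph properties stated in Theorem~\ref{t:iks}. Thus it suffices to produce a $z$-independent candidate and verify it has all the required properties; by uniqueness it must coincide with $p$.

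Concretely, I would first recall that $H(\varphi,z) = (h\varphi, e^{2\pi i\varphi}+\alpha z)$ and hence $H$ itself commutes with the projection $\pi_z \colon (\varphi,z)\mapsto\varphi$ along $z$; moreover the $z$-coordinate of $H$ is a uniform fiberwise contraction with rate $\alpha\ll 1$. Consequently $\hat\FF$, viewed through the further projection $(\varphi,z,x)\mapsto(\varphi,x)$, is conjugate-over-$\FF$ with fibers contracted in $z$. Second, I would write down the functional equation that $p$ satisfies from the commuting diagram, namely $p(\hat\FF(\varphi,z,x)) = h\,p(\varphi,z,x)$, and observe that since $\hat\FF$ acts on the $(\varphi,x)$-variables by $\FF$ independently of $z$, and on $z$ by an affine contraction depending only on $\varphi$, the graph transform used to build $p$ preserves the closed subspace of sections that are independent of $z$. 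Third, I would run the same contraction argument of \cite{Ilyashenko2008}[Subsection 5.2] restricted to this invariant subspace: it still converges (the subspace is closed, hence complete), producing a $z$-independent fixed point $\bar p(\varphi,x)$; by uniqueness of $p$ in the full space, $p = \bar p$, so $p$ and therefore $\tilde\beta_\varphi$ do not depend on $z$.

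The main obstacle I anticipate is making the invariance of the $z$-independent subspace under the relevant graph transform completely rigorous: one must check that the defining recursion for $p$ (which a priori involves $\hat\FF^{-1}$ and the $z$-component of the dynamics) never reintroduces $z$-dependence. This hinges precisely on the structural facts that (i) $\hat\FF$ acts on $(\varphi,x)$ by $\FF$ regardless of $z$, and (ii) the $z$-dynamics of $H$, although it depends on $\varphi$, feeds only \emph{into} the $z$-coordinate and not back into $(\varphi,x)$ — so when one evaluates the semiconjugacy, which outputs only a point of $S^1$, the $z$-slot can be carried along passively and then forgotten. A clean way to phrase this is: the vertical foliation $\{(\varphi_0,*,x_0)\}$ (fixing $\varphi,x$, varying $z$) is $\hat\FF$-invariant and is mapped by $\hat\FF$ into another such leaf; since $p$ is constant on the fibers of $\pi$ iff its fibers $\tilde N_\varphi$ are unions of such vertical leaves, and since the center-stable-type graph characterizing $\tilde N_\varphi$ is unique, the $\hat\FF$-invariance of the vertical foliation forces $\tilde N_\varphi$ to respect it. I would present the argument in this uniqueness-plus-invariance form rather than re-deriving the fixed-point construction, citing \cite{Ilyashenko2008}[Subsection 5.2] for the existence and uniqueness and only adding the short verification that the $z$-independent ansatz is consistent.
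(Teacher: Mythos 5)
Your argument is correct and is essentially the argument the paper relies on: the paper gives no proof of its own for this statement, deferring entirely to \cite{Ilyashenko2008}[Subsection~5.2], where the point is exactly the one you isolate --- the $(\varphi,x)$-component of $\hat\FF$ does not involve $z$, so the vertical $z$-disks are carried into $z$-disks and the graph-transform/shadowing characterization of the fibers of $p$ never sees the $z$-coordinate. The one thing to keep explicit is that Theorem~\ref{t:iks} as stated does not assert uniqueness of $p$, so, as you yourself note, you must appeal to the fixed-point (or shadowing) characterization of $p$ coming from the construction in \cite{Ilyashenko2008} in order to conclude that your $z$-independent candidate actually coincides with $p$.
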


It follows from Theorem.~\ref{t:iks} that we can rectify the perturbed skew product $\GG$. Namely, let $\hat F = \hat\HH \circ \hat\FF \circ \hat\HH^{-1}$. Statement a) of Theorem~\ref{t:iks} implies that $\hat F$ is a solenoidal skew product.

By Theorem~\ref{t:iks52}, the map~$\HH \colon S^1 \times M \to S^1 \times M$ such that $\HH(\varphi,x) = (p(\varphi,x),x)$ is a homeomorphism. Thus~$\hat F$ is the solenoidal extension of a circle skew product~$F := \HH \circ \FF \circ \HH^{-1}$.

%there exists a H\"older homeomorphism $\mathcal H \colon X \to X$ which conjugates $\hat \FF$ to a new solenoidal skew product $\hat F$.

\cite{Ilyashenko2012} also provides an important addition which we apply here:
\begin{theorem} \label{t:in}
The fiber maps $f_b$ of the skew product $F$ are $C^1$-close to those of the skew product $F_0$, in the following sense:
$$
\dist(f_b^{\pm 1}, f_{(0),b}^{\pm 1})_{C^1} \le O(\rho).
$$
\end{theorem}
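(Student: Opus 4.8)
The plan is to track the effect of the rectifying conjugacy $\hat\HH$ on the fiber maps and show it is $C^1$-small. First I would recall the structure established so far: $\hat F_0$ is a genuine skew product over $h$ with fiber maps $f_{(0),\varphi}$, the perturbation $\hat\FF$ is $C^1$-close to it but not a skew product, and Theorem~\ref{t:iks} produces a homeomorphism $\hat\HH$ straightening the center-unstable picture so that $\hat F = \hat\HH\circ\hat\FF\circ\hat\HH^{-1}$ is again a skew product. The key point to exploit is that $\hat\HH$ moves only the $\varphi$-coordinate: $\hat\HH(\varphi,z,x) = (p(\varphi,z,x),z,x)$, and by Theorem~\ref{t:iks52} the function $p$ does not depend on $z$, so effectively $\HH(\varphi,x)=(p(\varphi,x),x)$. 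The content of the present theorem is that writing $F(\varphi,x) = (h\varphi, f_\varphi(x))$, each $f_\varphi$ is $C^1$-$O(\rho)$-close to $f_{(0),\varphi}$.

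The key steps, in order: (1) Use the $C^0$-estimate on $\hat\HH$. Since $\hat F_0$ already satisfies the modified dominated splitting condition~\eqref{e:mdsc}, the fibers $\tilde N_\varphi$ of $p$ are $C^1$-close to the vertical fibers $\{\varphi\}\times D\times M$ of $p_0$; quantitatively, $\HH$ is $C^1$-$O(\rho)$-close to the identity on $S^1\times M$. This is exactly the kind of estimate that the graph-transform / Hadamard–Perron machinery of~\cite{HPS1977},~\cite{Ilyashenko2012} delivers: the invariant section depends continuously, and in fact Lipschitz-continuously with small constant, on the perturbation, so $\|\HH - \Id\|_{C^1} = O(\rho)$ and likewise $\|\HH^{-1}-\Id\|_{C^1}=O(\rho)$. (2) Compute $f_\varphi$ from the conjugacy. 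Reading off the second coordinate of $\hat\HH\circ\hat\FF\circ\hat\HH^{-1}$, for a point $(\varphi,x)$ one lands on a point $(\varphi',x')=\HH^{-1}(\varphi,x)$ with $\varphi'$ close to $\varphi$ and $x'=x$, applies $\FF$ to get a point whose $M$-component is $\FF_M(\varphi',x)$ (the $M$-component of $\FF$), then applies $\HH$ which leaves the $M$-component untouched. Hence $f_\varphi(x)$ equals $\FF_M$ evaluated at a point $\varphi'(\varphi,x)$ that is $O(\rho)$-close to $\varphi$ in $C^1$. (3) Combine the estimates. Since $\FF$ is $C^1$-$O(\rho)$-close to $F_0$, its $M$-component $\FF_M$ is $C^1$-$O(\rho)$-close to $f_{(0),\varphi}$; composing with the near-identity reparametrization $\varphi\mapsto\varphi'(\varphi,x)$ from step (2), which is itself $C^1$-$O(\rho)$-close to $(\varphi,x)\mapsto\varphi$, and using the chain rule plus uniform $C^1$-bounds on $F_0$ on a compact set, we get $\|f_\varphi - f_{(0),\varphi}\|_{C^1}=O(\rho)$. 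The same argument applied to the inverse skew product (which exists since the fiber maps are diffeomorphisms onto their images and $h$ is a covering) handles the $f_\varphi^{-1}$ estimate.

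The main obstacle I expect is step~(1): extracting the \emph{$C^1$} (not merely $C^0$) smallness of $\hat\HH-\Id$ with a \emph{linear} dependence on $\rho$. The statement of Theorem~\ref{t:iks} as quoted only gives that $\hat\HH$ is a homeomorphism and that the fibers $\tilde N_\varphi$ are graphs of smooth maps $\tilde\beta_\varphi$ continuous in $\varphi$ — it does not, on its face, package the quantitative Lipschitz-in-$\rho$ bound. So the real work is to go back into the proofs in~\cite{Ilyashenko2012} and~\cite{Ilyashenko2008} and observe that the invariant graph $\tilde\beta_\varphi$ is obtained as the fixed point of a contraction on a space of sections whose displacement from the trivial section is controlled by the size of the perturbation, yielding $\|\tilde\beta_\varphi\|_{C^1}=O(\rho)$ uniformly in $\varphi$; everything else (steps~2 and~3) is then a routine chain-rule bookkeeping on a compact manifold with uniformly bounded derivatives. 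One should also be slightly careful that $\HH$ acts on $S^1$, which is compact without boundary, so "close to the identity" is unambiguous and the reparametrization $\varphi\mapsto\varphi'(\varphi,x)$ is a genuine diffeomorphism of $S^1$ for each fixed $x$ once $\rho$ is small; this is automatic from the near-identity $C^1$ bound.
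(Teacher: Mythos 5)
The paper does not actually prove this statement: Theorem~\ref{t:in} is quoted from the literature, and the entire ``proof'' in the paper is the sentence ``This is the statement~(12), proved in Subsection~6.2 of \cite{Ilyashenko2012}.'' So your proposal is doing genuinely more work than the paper does --- you are reconstructing the argument that lives inside the cited reference. Your overall architecture (read off the fiber map of $F=\HH\circ\FF\circ\HH^{-1}$ as $f_\varphi(x)=\FF_M(\tilde\beta_\varphi(x),x)$ using the fact that $\HH$ moves only the $\varphi$-coordinate, then control the displacement of the invariant section from the trivial section via the graph-transform fixed-point estimate, then do chain-rule bookkeeping) is the right shape and is consistent with how the estimate is obtained in \cite{Ilyashenko2012}.

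One claim in your step~(1) is overstated and, taken literally, false: $\|\HH-\Id\|_{C^1}=O(\rho)$ cannot hold for $\HH$ as a map of $S^1\times M$, because $p(\varphi,x)$ is in general only H\"older continuous in $\varphi$ --- the paper itself says so immediately after Theorem~\ref{t:iks} (``the maps $\tilde\beta_\varphi$ are even H\"older continuous in $\varphi$''), and this loss of regularity in the base direction is precisely why the Hirsch--Pugh--Shub machinery only yields a homeomorphism $\hat\HH$ rather than a diffeomorphism. Fortunately your argument does not actually need that claim: since $\HH$ fixes the $M$-component, computing $f_\varphi$ and $Df_\varphi$ requires differentiating only in $x$ for fixed $\varphi$, so the estimate you really need is the one you arrive at in your closing paragraph --- that for each fixed $\varphi$ the graph map $x\mapsto\tilde\beta_\varphi(x)$ is $C^1$-$O(\rho)$-close to the constant map $x \mapsto \varphi$, uniformly in $\varphi$. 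If you restate step~(1) in that fiberwise form, the sketch is sound (modulo the contraction-mapping estimate itself, which you correctly identify as the real content and which is what Subsection~6.2 of \cite{Ilyashenko2012} supplies); as written, step~(1) asserts smoothness in $\varphi$ that is not available.
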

This is the statement~(12), proved in Subsection~6.2 of \cite{Ilyashenko2012}.

Thus $F$ is $C^1$-close to $F_0$. By Theorem~\ref{t:skpr-circle}, $F$ has a massive attractor. Now note that the property to have a massive attractor is preserved by a homeomorphic conjugacy. Theorem~\ref{t:endos} is proven.

\section{Acknowledgements}

The author would like to thank Yulij Ilyashenko for posing the problem of the robustness of thick attractors, Victor Kleptsyn, Anton Gorodetski, Stefano Luzzatto and Jos\'{e}~F.~Alves for fruitful discussions, and the anonymous referee for helpful comments. The author is grateful to Institut~de~Recherche Mathematique~de~Rennes, where the idea of this paper was born. The author says special thank you to catamaran ``Barbos'' where all the pieces of the proof finally came together.
%Universit\'e~de~Rennes~1 and

%\section{Trash}

\bibliographystyle{plain}
%\bibliography{dynsys}

\end{document}